\documentclass[11pt]{amsart}
\usepackage{mesfonts}

\def\S{\mathcal S}
\def\M{\mathcal{M}}
\def\F{\mathcal{F}}

\def\p{\mathbb{P}}

\def\Z{\mathcal{Z}}

\def\hmu{\hat\mu}
\def\m{m^{*}}
\def\hd{\hat \delta_{\beta}}
\def\D{\mathcal{D}}
\def\EL{\mathcal{E}}
 
\def\dim{\textrm{dim}}
\def\span{\textrm{span}}
\RequirePackage{amssymb}

%dimension, parametres

\topmargin -0.5cm
\textwidth  16cm
\textheight 21cm
\oddsidemargin 0cm
\evensidemargin 0cm
\parindent 3mm
\parskip 5 mm

\newtheorem{lemma}{Lemma}
\newtheorem{theorem}{Theorem}
\newtheorem{proposition}{Proposition}
\newtheorem{corollary}{Corollary}

\parindent=0mm

\usepackage{graphicx}

\RequirePackage{amssymb}

\begin{document}
\title{Mixing Least-Squares Estimators when the Variance is Unknown }
\author{Christophe Giraud}
\address{Universit\'e de Nice Sophia-Antipolis, Laboratoire J-A Dieudonn\'e, Parc Valrose, 06108 Nice cedex 02}
\email{cgiraud@math.unice.fr}
\date{First draft 03/02/2007, Revision 02/11/2007}
\keywords{ Gibbs mixture - shrinkage estimator - oracle inequalities - adaptive estimation - linear regression} 
\subjclass[2000]{62G08}

\begin{abstract}
We propose a procedure to handle the problem of Gaussian regression when the variance is unknown. We mix least-squares estimators from various models according to a procedure inspired by that of Leung and Barron \cite{LB06}. 
We  show that in some cases the resulting estimator is a simple shrinkage estimator. We then apply this procedure in various statistical settings such as linear regression or adaptive estimation in Besov spaces.
Our results provide non-asymptotic risk bounds for the Euclidean risk of the estimator.
\end{abstract}

\maketitle

%\tableofcontents

\section{Introduction}

We consider the regression framework, where we have  noisy observations
\begin{equation}
  \label{modele}
 Y_{i}=\mu_i+\sigma\eps_i,\ \ i=1,\ldots,n 
\end{equation}
of an unknown vector $\mu=\pa{\mu_{1},\ldots,\mu_{n}}'\in\R^{n}$. We assume  that the $\eps_{i}$'s are i.i.d standard Gaussian random variables and that the noise level $\sigma>0$ is unknown. Our aim is to estimate~$\mu$.

In this direction, we introduce a finite collection $\ac{\S_{m}, \ m\in \M}$ of linear spaces  of $\R^n$, which we call henceforth models. To each model $\S_{m}$, we  associate the least-squares estimator $\hmu_{m}=\Pi_{\S_{m}}Y$ of $\mu$ on $\S_{m}$, where $\Pi_{\S_{m}}$ denotes the orthogonal projector onto $\S_{m}$. The $L^2$-risk of the estimator $\hmu_{m}$ with respect to the Euclidean norm $\|\cdot\|$ on $\R^n$
is 
\begin{equation}\label{risk1}
\E\cro{\|\mu-\hat \mu_{m}\|^{2}}= \|\mu-\Pi_{\S_{m}}\mu\|^{2}+\dim(\S_{m})\sigma^{2}.
\end{equation}

Two strategies have emerged to handle the problem of the choice of an estimator of $\mu$ in this setting. One strategy is to select a model $\S_{\hat m}$ with a data driven criterion  and use $\hmu_{\hat m}$ to estimate $\mu$. In the favorable cases, the risk of this estimator is of order the minimum over $\M$ of the risks \eref{risk1}. Model selection procedures have received a lot of attention in the literature, 
starting from the pioneer work of Akaike \cite{A69} and Mallows \cite{M73}. 
It is beyond the scope of this paper to make an historical review of the topic. We simply mention in the Gaussian setting the papers of Birg\'e and Massart \cite{BM01, BM06} (influenced by Barron and Cover \cite{BC91} and Barron, Birg\'e and Massart \cite{BBM99})  which give  non-asymptotic risk bounds for a selection criterion generalizing  Mallows'$C_{p}$.

An alternative  to model selection is mixing. One estimates $\mu$ by a convex (or linear) combination of the $\hmu_{m}$s
\begin{equation}\label{mixing}
\hat \mu=\sum_{m\in \M}w_{m}\hat\mu_{m},
\end{equation}
with weights $w_{m}$ which are $\sigma(Y)$-measurable random variables.
This strategy is not suitable when the goal is to select a single model $\S_{\hat m}$, nevertheless it enjoys the nice property that $\hmu$ may perform better than the best of the $\hmu_{m}$s. 
Various choices of weights $w_{m}$ have been proposed, from an  information theoretic or Bayesian  perspective. Risk bounds have been provided 
 by Catoni \cite{C99}, Yang \cite{Y00, Y04}, Tsybakov~\cite{T03} and Bunea {\it et al.}~\cite{BTW07}  for regression on a random design and by
 Barron \cite{B87}, Catoni \cite{C97} and Yang \cite{Y00b} for density estimation. 
 For the Gaussian regression framework we consider here, Leung and Barron \cite{LB06} propose a mixing procedure  for which they  derive a precise non-asymptotic  risk bound. When the collection of models is not too complex, this bound shows that the risk of their estimator $\hmu$ is close to the minimum over $\M$ of the risks \eref{risk1}. Another nice feature of their mixing procedure is that both the weights $w_{m}$ and the estimators $\hat\mu_{m}$ are build on the same data set, which enable to handle cases where the law of the data set is not exchangeable. Unfortunately, their choice of  weights $w_{m}$ depends on the variance $\sigma^2$, which is usually unknown.
  
 In the present paper, we consider the more practical situation where the variance $\sigma^2$ is unknown. Our mixing strategy is akin  to that of Leung and Barron \cite{LB06}, but is not depending on the variance $\sigma^2$. In addition, we show that both our estimator and the estimator of Leung and Barron are simple shrinkage estimators in some cases. From a  theoretical point of view, we relate our weights $w_{m}$ to a Gibbs measure on $\M$ and  derive a sharp risk bound for the  estimator $\hmu$. Roughly, this bound says  that the risk of $\hmu$ is  close to the minimum over $\M$ of the risks \eref{risk1} in the favorable cases. We then discuss  the  choice of the collection of models $\ac{\S_{m}, m\in\M}$ in various situations. Among others, we 
   produce an   estimation procedure which is adaptive over a large class of Besov balls.

 Before presenting our mixing procedure, we briefly recall that of Leung and Barron \cite{LB06}. Assuming that
 the variance $\sigma^2$ is known, they use  the  weights
 \begin{equation}\label{LB}
w_{m}={\pi_{m}\over \Z}\exp\pa{-{\beta}\cro{\|Y-\hmu_{m}\|^2/\sigma^2+2\dim(\S_{m})-n}},\quad m\in\M
\end{equation}
where $\ac{\pi_{m},\ m\in\M}$ is a given prior distribution on $\M$  and $\Z$ normalizes the sum of   the $w_{m}$s to one. 
These weights have a Bayesian flavor. Indeed, they appear with $\beta=1/2$ in Hartigan \cite{H02}  which considers the Bayes procedure with the following (improper) prior distribution: pick an $m$ in $\M$ according to $\pi_{m}$ and then sample $\mu$ "uniformly" on $\S_{m}$. Nevertheless,  in Leung and Barron \cite{LB06} the role of the prior distribution $\ac{\pi_{m},\ m\in\M}$ is to  favor models with low complexity. Therefore,
the choice of $\pi_{m}$ is driven by the complexity of the model $\S_{m}$  rather than from a prior knowledge on $\mu$. In this sense  their approach  differs from the classical Bayesian point of view. Note that the term $\|Y-\hmu_{m}\|^2/\sigma^2+2\dim(\S_{m})-n$ appearing in the weights \eref{LB} is an unbiased estimator of the risk \eref{risk1} rescaled by $\sigma^2$.  
The  size of the weight $w_{m}$ then depends on  the difference between  this estimator of the risk \eref{risk1} and  $-\log(\pi_{m})$, which can be thought as a  complexity-driven penalty (in the spirit of Barron and Cover~\cite{BC91} or Barron {\it et al.}~\cite{BBM99}). The parameter $\beta$ tunes the balance between this two terms.
 For  $\beta\leq 1/4$, Theorem 5 in \cite{LB06} provides a sharp risk bound for the procedure.

The rest of the  paper is organized as follows. We present our mixing strategy in the next section and express in some cases  the resulting estimator $\hmu$ as a shrinkage estimator. In Section~3, we state non-asymptotic risk bounds for the
procedure and discuss  the choice of the tuning parameters.
Finally, we propose in Section~4 some weighting strategies for linear regression or for adaptive regression over Besov balls. Section~5 is devoted to 
a numerical illustration and Section~6 to
the proofs. Additional results are given in the Appendix.

We end this section with some notations we shall use along this paper. We write $|m|$ for the cardinality of a finite set $m$, and $<x,y>$ for the inner product of two vectors $x$ and $y$ in $\R^n$. To any real number $x$, we denote by $(x)_{+}$  its positive part and by $\lfloor x \rfloor$ its integer part.

\section{The estimation procedure}
We assume henceforth that $n\geq 3$.
\subsection{The estimator}
We start with a finite collection of models  $\ac{\S_{m},\;m\in\M}$ and to  each model $\S_{m}$ we associate the least-squares estimator $\hmu_{m}=\Pi_{\S_{m}}Y$ of $\mu$ on $\S_{m}$. We also introduce  
 a probability distribution  $\ac{\pi_{m},\ m\in\M}$ on $\M$, which is meant to take into account the complexity of the family and favor models with low dimension. For example, if the collection $\ac{\S_{m},\;m\in\M}$ has (at most) $e^{ad}$ models per dimension $d$, we suggest to choose $\pi_{m}\propto e^{(a+1/2)\dim(\S_{m})}$, see the example at the end of Section~\ref{generalbound}. As mentioned before, the quantity $-\log(\pi_{m})$ can be interpreted as a complexity-driven penalty associated to the model $\S_{m}$ (in the sense of Barron {\it et al.}~\cite{BBM99}). The performance of our estimation procedure depends strongly on the choice of the collection of models $\ac{\S_{m},\;m\in\M}$ and the probability distribution  $\ac{\pi_{m},\;m\in\M}$. We detail  in Section~\ref{exemples} some suitable choices of these families for linear regression and  estimation of BV or Besov functions.

Hereafter, we assume  that there exists a linear space $\S_{*}\subset \R^n$ of dimension $d_{*}<n$, such that $\S_{m}\subset \S_{*}$ for all $m\in \M$. We will take advantage of this situation and estimate the variance of the noise by 
\begin{equation}\label{shat}
\hat\sigma^2={\|Y-\Pi_{\S_{*}}Y\|^2\over N_{*}}
\end{equation}
where $N_{*}=n-d_{*}$. We emphasize that we do not assume that $\mu\in\S_{*}$ and the estimator $\hat\sigma^2$ is (positively) biased in general. It turns out that our estimation procedure does not need a precise estimation of the variance $\sigma^2$ and the choice~\eref{shat}  gives good results. In practice, we  may replace  the residual estimator $\hat\sigma^2$ by 
a difference-based estimator (Rice~\cite{R84}, Hall {\it et al.}~\cite{HKT90}, Munk {\it et al.}~\cite{MBWF05}, Tong and Wang~\cite{TW05}, Wang {\it et al.}~\cite{WBCL07}, etc) or by any non-parametric estimator  (e.g.\;Lenth~\cite{L89}), but we are not able to prove any bound similar to~\eref{fgibbs} or~\eref{foracle} when using one of these estimators.

Finally, we associate to the collection of models  $\ac{\S_{m},\;m\in\M}$, a collection  $\ac{L_{m},\ m\in\M}$ of non-negative weights. We recommend to set $L_{m}=\dim(\S_{m})/2$, but any (sharp) upper bound of this quantity may also be appropriate, see the discussion after  Theorem~\ref{resultat}.
Then, for a given  positive constant $\beta$   we define the estimator $\hmu$  by
\begin{equation}\label{estimateur}
\hat \mu=\sum_{m\in \M}w_{m}\hat\mu_{m}\quad\textrm{with}\quad w_{m}={\pi_{m}\over \Z}\exp\pa{\beta{\|\hmu_{m}\|^2\over \hat \sigma^2}-{L_{m}}},
\end{equation}
 where  $\Z$ is a constant that normalizes the sum of the $w_{m}$s to one. An alternative formula for $w_{m}$ is $w_{m}=\pi_{m}\exp\pa{-\beta{\|\Pi_{\S_{*}}Y-\hmu_{m}\|^2/\hat \sigma^2}-{L_{m}}}/\Z'$
with $\Z'=e^{-\beta\|\Pi_{\S_{*}}Y\|^2/\hat \sigma^2}\Z$.
We can interpret the term $\|\Pi_{\S_{*}}Y-\hmu_{m}\|^2/ \hat \sigma^2+L_{m}/\beta$ appearing in the exponential  as a (biased) estimate of the risk \eref{risk1} rescaled by $\sigma^2$. As in \eref{LB}, the balance in the weight $w_{m}$ between this estimate of the risk and the penalty $-\log(\pi_{m})$ is tuned by $\beta$.
We refer to the discussion after  Theorem~\ref{resultat}
for the choice of this parameter. We mention that 
the weights $\ac{w_{m},\, m\in\M}$ can be viewed as a Gibbs measure on $\M$ and we will  use this property to assess the performance of the procedure.

We emphasize in the next section, that $\hmu$ is a simple shrinkage estimator in some cases.

\subsection{A simple shrinkage estimator}\label{casortho}
In this section, we focus on the case where $\M$ consists of all the subsets of $\ac{1,\ldots,p}$, for some $p<n$ and 
$\S_{m}=\textrm{span}\ac{v_{j}, \ j\in m}$
with $\ac{v_{1},\ldots,v_{p}}$ an orthonormal family of vectors in $\R^n$. We use the convention 
$\S_{\emptyset}=\ac{0}$. An example of such a setting is given in Section \ref{exhaar}, see also the numerical illustration Section~\ref{illustration}. Note that $\S_*$ corresponds here to $\S_{\ac{1,\ldots,p}}$ and $d_*=p$.

 To favor models with small dimensions,  we choose the probability distribution 
\begin{equation}\label{orthopoids}
\pi_{m}=\pa{1+{1\over p^{\alpha}}}^{-p}p^{-\alpha |m|},\quad m\in\M,
\end{equation}
with $\alpha>0$. We also set $L_{m}=b |m|$ for some $b\geq 0$.
\begin{proposition}\label{fast}
Under the above assumptions, we have the following expression for $\hmu$
\begin{equation}\label{fasteq}
\hmu=\sum_{j=1}^{p}(c_{j}Z_{j})v_{j},\quad \textrm{with}\quad  Z_{j}=<Y,v_{j}>\textrm{and}\quad c_{j}={\exp\pa{\beta Z_{j}^2/{\hat \sigma^2}}\over p^\alpha\exp\pa{b}+\exp\pa{\beta Z_{j}^2/{\hat \sigma^2}}}\,.
\end{equation}
\end{proposition}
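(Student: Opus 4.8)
The plan is to exploit the orthonormality of $\{v_1,\ldots,v_p\}$ to make everything factorize over the indices $j$. First I would record that, since $\S_m=\span\{v_j,\ j\in m\}$, the orthogonal projector gives
\[
\hmu_m=\Pi_{\S_m}Y=\sum_{j\in m}Z_jv_j,\qquad \|\hmu_m\|^2=\sum_{j\in m}Z_j^2,
\]
the second identity using $\langle v_j,v_k\rangle=\delta_{jk}$. Substituting the prior \eref{orthopoids} and $L_m=b|m|$ into the weights \eref{estimateur}, and writing $C=(1+p^{-\alpha})^{-p}$, I obtain
\[
w_m=\frac{C}{\Z}\,p^{-\alpha|m|}\exp\!\Big(\frac{\beta}{\hat\sigma^2}\sum_{j\in m}Z_j^2-b|m|\Big)=\frac{C}{\Z}\prod_{j\in m}a_j,
\]
where $a_j:=p^{-\alpha}e^{-b}\exp(\beta Z_j^2/\hat\sigma^2)$. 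The point is that each of the three factors $p^{-\alpha|m|}$, $e^{-b|m|}$ and $\exp(\beta\|\hmu_m\|^2/\hat\sigma^2)$ is a product over $j\in m$, so $w_m$ is proportional to $\prod_{j\in m}a_j$.

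Next I would interchange the two sums in $\hmu=\sum_{m\in\M}w_m\hmu_m$. Writing $\hmu_m=\sum_{j\in m}Z_jv_j$ and collecting the coefficient of each $v_j$ yields
\[
\hmu=\sum_{m\in\M}w_m\sum_{j\in m}Z_jv_j=\sum_{j=1}^p Z_j\Big(\sum_{m\ni j}w_m\Big)v_j,
\]
so it remains only to show that $\sum_{m\ni j}w_m=c_j$ for each $j$.

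The computation of this partial sum is where the (elementary) subset algebra enters. Since $\M$ is the full power set of $\{1,\ldots,p\}$, the standard identity $\sum_{m\subseteq\{1,\ldots,p\}}\prod_{k\in m}a_k=\prod_{k=1}^p(1+a_k)$ gives the normalizer $\Z=C\prod_{k=1}^p(1+a_k)$ (the empty set contributing the factor $1$), and factoring out the index $j$,
\[
\sum_{m\ni j}\prod_{k\in m}a_k=a_j\sum_{m'\subseteq\{1,\ldots,p\}\setminus\{j\}}\prod_{k\in m'}a_k=a_j\prod_{k\neq j}(1+a_k).
\]
Dividing, the common factor $C\prod_{k\neq j}(1+a_k)$ cancels and I get $\sum_{m\ni j}w_m=a_j/(1+a_j)$; multiplying numerator and denominator by $p^\alpha e^b$ turns this into $\exp(\beta Z_j^2/\hat\sigma^2)/(p^\alpha\exp(b)+\exp(\beta Z_j^2/\hat\sigma^2))=c_j$, which is exactly \eref{fasteq}. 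I do not expect any genuine obstacle: the whole argument is bookkeeping, and the only step needing a little care is the power-set identity for $\Z$ and for the sums $\sum_{m\ni j}$, together with checking that the prior constant $C$ cancels cleanly so that $c_j$ carries no leftover $(1+p^{-\alpha})^{-p}$ prefactor.
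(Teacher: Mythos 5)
Your proof is correct and follows essentially the same route as the paper's: both exploit the factorization of the weights over the power set, reduce the claim to computing $\sum_{m\ni j}w_m$, and let the contribution of the indices $k\neq j$ cancel between numerator and denominator. The only cosmetic difference is that you evaluate $\Z$ explicitly via the product identity $\sum_{m}\prod_{k\in m}a_k=\prod_{k=1}^p(1+a_k)$, whereas the paper merely factors out the index $j$ (writing each $m\ni j$ as $\ac{j}\cup m'$) and cancels the common sum over subsets of $\ac{1,\ldots,p}\setminus\ac{j}$ without ever computing the normalizing constant.
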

The proof of this proposition is postponed to Section \ref{preuvefast}. 
The main interest of Formula~\eref{fasteq} is to allow a fast computation 
of $\hmu$. Indeed, we only need to compute the $p$ coefficients $c_{j}$ instead of the
 $2^{p}$ weights $w_{m}$ of  formula \eref{estimateur}.
% We also use Formula \eref{fasteq} in Section \ref{perfortho} to analyse the performance of $\hmu$ for values of $\beta$ not covered by Theorem \ref{resultat}.

The coefficients $c_{j}$ are shrinkage coefficients taking values in $[0,1]$. They are   close to one when $Z_{j}$ is large and close to zero when  $Z_{j}$ is small. The transition from 0 to 1 occurs when 
$ Z_{j}^2\approx\beta^{-1} (b+\alpha\log p)\hat \sigma^2$. The choice of the tuning parameters $\alpha$, $\beta$ and $b$ will be discussed in Section~\ref{parametresortho}.

{\bf Remark 1.}  Other choices are possible for $\ac{\pi_{m},\ m\in\M}$ and they lead to different $c_{j}$s. Let us mention  the choice  $\pi_{m}=\pa{(p+1)\binom{p}{|m|}}^{-1}$ for which the $c_{j}$s are given by
$$c_{j}={\int_{0}^1q\prod_{k\neq j}\cro{q+(1-q)\exp\pa{-\beta Z_{k}^2/\hat \sigma^2+b}}dq \over \int_{0}^1\prod_{k=1}^p\cro{q+(1-q)\exp\pa{-\beta Z_{k}^2/\hat \sigma^2+b}}dq},\quad\ \textrm{for }j=1,\ldots,p. $$
This formula can be derived from the Appendix of Leung and Barron \cite{LB06}.

\vspace{0.2cm}
{\bf Remark 2.} 
When the variance is known, we can give a formula similar to \eref{fasteq} for the estimator of Leung and Barron \cite{LB06}. Let us consider the same setting, with $p\leq n$.  Then, when the distribution $\ac{\pi_{m},\ m\in\M}$ 	is given by \eref{orthopoids}, the estimator \eref{mixing} with weights $w_{m}$ given by \eref{LB} takes the form
\begin{equation}\label{fLB}
\hmu=\sum_{j=1}^p \pa{{e^{\beta Z_{j}^2/\sigma^2}\over p^{\alpha}e^{2\beta }+e^{\beta Z_{j}^2/\sigma^2}}\,Z_{j}}v_{j}.
\end{equation}

\section{The performance}\label{perf}
\subsection{A general risk bound}\label{generalbound}
The next result gives an upper bound on the $L^2$-risk of the estimation procedure. We remind the reader that $n\geq 3$ and set 
\begin{equation}\label{phi}
\begin{array}{lcll}
\phi :& ]0,1[ & \to & ]0,+\infty[\\
& x &\mapsto &(x-1-\log x)/2\ ,
\end{array}
\end{equation}
which is  decreasing.
\begin{theorem}\label{resultat}
Assume  that $\beta$ and $N_{*}$ fulfill the condition
\begin{equation}\label{conditions}
\beta< 1/4 \quad \textrm{and}\quad N_{*}\geq 2+{\log n\over \phi(4\beta)},
\end{equation}
with $\phi$ defined by \eref{phi}.
Assume also that $L_{m}\geq \textrm{dim}(\S_{m})/2$, for all $m\in\M$. Then, we have  the following upper bounds on the $L^2$-risk of the estimator $\hmu$ 
\begin{eqnarray}
\lefteqn{\E\pa{{\|\mu-\hmu\|^ {2}}}}\nonumber\\
&\leq& -(1+\varepsilon_{n})\ {\bar\sigma^2\over \beta}\ \log\cro{\sum_{m\in\M}\pi_{m}e^{-{\beta}\cro{{\|\mu-\Pi_{\S_{m}}\mu\|^{2}}-\dim(\S_{m})\sigma^2}/\bar\sigma^2-L_{m}}}+{\sigma^2\over 2\log n}\label{fgibbs}\\
&\leq& \pa{1+\varepsilon_{n}}\inf_{m\in\M}\ac{{{\|\mu-\Pi_{\S_{m}}\mu\|^2}
%-\textrm{dim}(\S_{m})\sigma^2
+{\bar\sigma^2\over \beta}\pa{L_m-\log\pi_m}}}+{\sigma^2\over 2\log n},\label{foracle}
\end{eqnarray}
where $\varepsilon_{n}=(2n\log n)^{-1}$ and $\bar\sigma^2=\sigma^2+\|\mu-\Pi_{\S_{*}}\mu\|^2/N_*$.
\end{theorem}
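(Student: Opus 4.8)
The plan is to exploit an independence/reduction step, carry out a Stein-type unbiased-risk computation conditionally on the variance estimate, and then integrate carefully over $\hat\sigma^2$. First I would record that, since every $\S_m\subset\S_*$ while $\hat\sigma^2$ is a function of $(I-\Pi_{\S_*})Y$, the vector $X:=\Pi_{\S_*}Y$ and the scalar $\hat\sigma^2$ are independent Gaussian functionals, and each $\hmu_m=\Pi_{\S_m}X$ as well as $\hmu$ depends on $Y$ only through $X$ and $\hat\sigma^2$. Writing $\mu_*=\Pi_{\S_*}\mu$ and using $\hmu\in\S_*$, the risk splits as $\|\mu-\hmu\|^2=\|\mu-\Pi_{\S_*}\mu\|^2+\|\mu_*-\hmu\|^2$, so it suffices to bound $\E\cro{\|\mu_*-\hmu\|^2\,|\,\hat\sigma^2}$ working inside $\S_*$ with $X\sim N(\mu_*,\sigma^2 I)$ and $\hat\sigma^2$ frozen.

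Conditionally on $\hat\sigma^2=s^2$, I would apply Stein's unbiased risk formula in $\S_*$,
\[
\E\cro{\|\mu_*-\hmu\|^2\,|\,s^2}=\E\cro{\|X-\hmu\|^2+2\sigma^2\,\mathrm{div}\,\hmu-d_*\sigma^2\;\big|\;s^2},
\]
and compute the divergence. Since $\nabla_X w_m=\frac{2\beta}{s^2}w_m(\hmu_m-\hmu)$, one gets $\mathrm{div}\,\hmu=\sum_m w_m\dim\S_m+\frac{2\beta}{s^2}V$ with $V:=\sum_m w_m\|\hmu_m-\hmu\|^2$, while $\|X-\hmu\|^2=\sum_m w_m\|X-\hmu_m\|^2-V$. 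Hence the bracket equals $\sum_m w_m\pa{-\|\hmu_m\|^2+2\sigma^2\dim\S_m}+\|X\|^2-d_*\sigma^2+\pa{\frac{4\beta\sigma^2}{s^2}-1}V$. This is where the hypotheses enter: on the event $A=\ac{\hat\sigma^2\ge 4\beta\sigma^2}$ the coefficient of $V$ is $\le0$ (using $\beta<1/4$), and since $L_m\ge\dim\S_m/2$ we also have $2\sigma^2\dim\S_m-\frac{s^2}{\beta}L_m\le0$ there; as $A$ is $\sigma(\hat\sigma^2)$-measurable, conditionally it is trivial.

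Using the Gibbs identity $-\frac{s^2}{\beta}\log\Z=\sum_m w_m\pa{-\|\hmu_m\|^2+\frac{s^2}{\beta}L_m}+\frac{s^2}{\beta}\sum_m w_m\log(w_m/\pi_m)$ and dropping the nonnegative entropy term together with $V$, on $A$ the bracket is bounded by $\|X\|^2-d_*\sigma^2-\frac{s^2}{\beta}\log\Z$. I would then take the $X$-expectation and invoke Jensen: the free energy $(e_m)\mapsto-\frac{s^2}{\beta}\log\sum_m\pi_m e^{\beta e_m/s^2-L_m}$ is concave, so replacing $\|\hmu_m\|^2$ by $\E\|\hmu_m\|^2=\|\Pi_{\S_m}\mu\|^2+\dim\S_m\,\sigma^2$ only increases it. Combined with $\E\|X\|^2-d_*\sigma^2=\|\mu_*\|^2$ and $\|\Pi_{\S_m}\mu\|^2=\|\mu\|^2-\|\mu-\Pi_{\S_m}\mu\|^2$, the $\|\mu\|^2$ and $\|\mu-\Pi_{\S_*}\mu\|^2$ terms cancel and I obtain, on $A$, $\E\cro{\|\mu-\hmu\|^2\,|\,s^2}\le\Phi(s^2)$, where $\Phi(s^2)=-\frac{s^2}{\beta}\log\sum_m\pi_m\exp\pa{-\beta\cro{\|\mu-\Pi_{\S_m}\mu\|^2-\dim\S_m\,\sigma^2}/s^2-L_m}$ is exactly the right-hand side of \eref{fgibbs} evaluated at $s^2$.

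It remains to integrate over $\hat\sigma^2$, and this is the main obstacle. On the complementary event $A^c=\ac{\hat\sigma^2<4\beta\sigma^2}$ I would bound $\|\mu-\hmu\|^2$ crudely (using $\|\hmu\|\le\|X\|$) and control $\p(A^c)$ by the $\chi^2$ lower-tail Chernoff bound $\p(\hat\sigma^2<4\beta\sigma^2)\le e^{-N_*\phi(4\beta)}$, whose exponent is precisely why $\phi$ and the requirement $N_*\ge2+\log n/\phi(4\beta)$ appear; this forces the $A^c$-contribution below the additive $\sigma^2/(2\log n)$. On $A$ I must compare $\E\cro{\Phi(\hat\sigma^2)\mathbf{1}_A}$ with $\Phi(\bar\sigma^2)$: since $\E\hat\sigma^2=\bar\sigma^2$ and $\hat\sigma^2$ concentrates, a careful estimate of the upper tail of $\hat\sigma^2$ (where $\Phi$ grows) yields the multiplicative factor $(1+\varepsilon_n)$ with $\varepsilon_n=(2n\log n)^{-1}$. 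This tail bookkeeping, matching the constants in \eref{conditions}, is the delicate part; the Stein computation itself is routine. Finally, \eref{foracle} follows from \eref{fgibbs} by lower-bounding the sum by its $m$-th term, which replaces $-\frac{\bar\sigma^2}{\beta}\log(\cdots)$ by $\inf_m\ac{\|\mu-\Pi_{\S_m}\mu\|^2+\frac{\bar\sigma^2}{\beta}(L_m-\log\pi_m)}$ after dropping the nonpositive $-\dim\S_m\,\sigma^2$.
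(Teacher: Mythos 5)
Your reduction to $\S_{*}$, the conditional Stein computation, and the Gibbs identity plus Jensen step on the event $A=\ac{\hat\sigma^2\geq 4\beta\sigma^2}$ are all sound (your Jensen step on $-\log \Z$ is the variational twin of the paper's comparison with a fixed distribution $\alpha$, optimized at the end). The genuine gap is your treatment of $A^c=\ac{\hat\sigma^2<4\beta\sigma^2}$, and it is not a matter of "tail bookkeeping": that step fails. Bounding $\|\mu-\hmu\|^2$ crudely via $\|\hmu\|\leq\|X\|$ gives, by independence of $X$ and $\hat\sigma^2$, an $A^c$-contribution of order $\pa{\|\mu\|^2+\E\|X\|^2}\p(A^c)$. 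Now take $\mu\in\S_{m_{0}}$ for a fixed model $m_{0}$ and let $\|\mu\|\to\infty$: the right-hand side of \eref{foracle} stays bounded (the bias vanishes and $\bar\sigma^2=\sigma^2$), while $\p(A^c)=\p\pa{\chi^2(N_{*})/N_{*}<4\beta}$ is a fixed positive number not depending on $\|\mu\|$ (here $\mu\in\S_{*}$, so $N_{*}\hat\sigma^2/\sigma^2$ is a central $\chi^2$). Your bound therefore blows up like $\|\mu\|^2\,\p(A^c)$ whereas the target is finite; no Chernoff bound $\p(A^c)\leq e^{-N_{*}\phi(4\beta)}$ can compensate a term scaling with $\|\mu\|^2$, and the same objection kills any crude bound involving $\|X\|$, $\|\Pi_{\S_{*}}\mu\|$, or a worst-case bias over $\M$ (which is at least $\|\Pi_{\S_{*}}\mu\|^2$ when, e.g., the null model is in the collection). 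A secondary misconception: on $A$ your $\Phi$ is concave in $\hat\sigma^2$ (it is the perspective of a concave function, and $\E\,\hat\sigma^2=\bar\sigma^2$), so if your conditional bound held for all values of $\hat\sigma^2$, plain Jensen would finish the proof with no $(1+\varepsilon_{n})$ factor at all; the factor $\varepsilon_{n}$ in the theorem is not produced by an upper-tail estimate of $\hat\sigma^2$ — it is exactly the price of the bad event, which your scheme has no valid way to pay.

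The paper's proof avoids the event split altogether. It performs the free-energy comparison pointwise, for every realization of $Y$, against an arbitrary \emph{fixed} distribution $\alpha$, with the weights written in terms of $\|\hmu_{*}-\hmu_{m}\|^2$; the deficient-variance effect then enters only \emph{multiplicatively}, through the factor $1+\hd$ with $\hd=\pa{4\beta\sigma^2/\hat\sigma^2-1}_{+}$, in front of quantities ($\|\hmu_{*}-\hmu_{m}\|^2/\sigma^2$, $L_{m}$, $\D(\alpha|\pi)$) whose expectations are of the size of the main term — never of size $\|\mu\|^2$. Independence of $\hd$ and $\Pi_{\S_{*}}Y$ factorizes the expectation, and Lemma \ref{deviation} supplies \emph{expectation} bounds $\E\cro{\hd}\leq\varepsilon_{n}$ and $\E\cro{\hat\sigma^2\hd/\sigma^2}\leq\varepsilon_{n}$ (sharper than a probability bound, via the prefactor $2/[(1-4\beta)(N_{*}-2)]$); this is where $\varepsilon_{n}=(2n\log n)^{-1}$ and the additive $\sigma^2/(2\log n)$ actually come from. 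Repairing your argument amounts to adopting this structure: compare with a fixed $\alpha$ before taking any expectation, and only afterwards choose $\alpha$ as the Gibbs measure of the expected quantities — which is precisely the paper's proof.
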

The proof  Theorem \ref{resultat} is delayed to Section \ref{preuveresultat}. Let us  comment this result.

To start with,  the Bound \eref{fgibbs} may look somewhat cumberstone but it improves \eref{foracle} when there are several good models to estimate $\mu$. For example, we can derive from \eref{fgibbs} the bound
\begin{multline*}
\E\pa{{\|\mu-\hmu\|^ {2}}}\leq  \\
\pa{1+\varepsilon_{n}}\inf_{m\in\M}\ac{{{\|\mu-\Pi_{\S_{m}}\mu\|^2}
+{\bar\sigma^2\over \beta}\pa{L_m-\log\pi_m}}}+
\inf_{\delta\geq 0}\ac{\delta-{\bar\sigma^2\over \beta}\log|\M_{\delta}|}+{\sigma^2\over 2\log n},
\end{multline*}
where
$\M_{\delta}$ is the set made of those $m^*$ in $\M$ fulfilling
$$ {{\|\mu-\Pi_{\S_{m^*}}\mu\|^2}
+{\bar\sigma^2\over \beta}\pa{L_{m^*}-\log\pi_{m^*}}}\leq \delta+\inf_{m\in\M}\ac{{{\|\mu-\Pi_{\S_{m}}\mu\|^2}
+{\bar\sigma^2\over \beta}\pa{L_m-\log\pi_m}}}.$$
In the extreme case where all the quantities
${{\|\mu-\Pi_{\S_{m}}\mu\|^2}+{\bar\sigma^2\over \beta}\pa{L_m-\log\pi_m}}$ 
are equal,  \eref{fgibbs} then improves \eref{foracle}   by a factor  $\beta^{-1}{\bar\sigma^2}\log|\M|$.

We now discuss the choice of the parameter $\beta$ and the weights $\{L_{m},\ m\in\M\}$.
% once the priors $\{\pi_{m},\ m\in\M\}$ have been choosen.
The choice $L_{m}={\rm dim}(\S_{m})/2$ seems to be the more accurate  since it satisfies the conditions of Theorem~\ref{resultat} and minimizes the right hand side of~\eref{fgibbs} and~\eref{foracle}. We shall mostly use this one in the following, but there are some cases where it is easier to use some (sharp) upper bound of the dimension of $\S_{m}$ instead of dim$(\S_{m})$ itself, see for example Section \ref{besov}.

The largest parameter $\beta$ fulfilling Condition \eref{conditions} is
\begin{equation}\label{betaopt}
\beta={1\over 4}\ \phi^{-1}\pa{\log n \over N_{*}-2}<{1\over 4}.
\end{equation}
We suggest to use this value, since it minimizes the right hand side of~\eref{fgibbs} and~\eref{foracle}. 
%Note that when $N_{*}$ is large compared to $4\log n$, 
%$$\phi^{-1}\pa{\log n \over N_{*}-2}\sim 1-\sqrt{4\log n \over N_{*}-2}.$$
Nevertheless, as discussed in Section~\ref{parametresortho} 
for the situation of Section \ref{casortho}, it is sometimes possible to use larger values for $\beta$.

Finally, we would like to compare the bounds of Theorem \ref{resultat} with the minimum over $\M$ of the risks given by \eref{risk1}. Roughly, the Bound~\eref{foracle} states that the estimator $\hmu$ achieves the best trade-off between the bias $\|\mu-\Pi_{\S_{m}}\mu\|^2/\sigma^2$ and the complexity term $C_{m}=L_{m}-\log \pi_{m}$. More precisely, we derive from~\eref{foracle} the (cruder) bound
\begin{equation}\label{bias-complexity}
\E\pa{{\|\mu-\hmu\|^ {2}}}\leq \pa{1+\varepsilon_{n}}\inf_{m\in\M}\ac{{{\|\mu-\Pi_{\S_{m}}\mu\|^2}+{1\over \beta}C_{m}\sigma^2}}+R^*_{n}\sigma^2,
\end{equation}
with 
$$\varepsilon_{n}={1\over 2n\log n}\qquad\textrm{and}\qquad R^*_{n}={1\over 2\log n}+{\|\mu-\Pi_{\S_{*}}\mu\|^2\over \beta N^*\sigma^2}\,\sup_{m\in\M}C_{m}.$$
In particular, if $C_{m}$ is of order $\dim(\S_{m})$, then~\eref{bias-complexity} allows to compare the risk of $\hmu$ with the infimum of the risks~\eref{risk1}.
We discuss this point in the following example.

{\bf Example.} Assume that the family $\M$ has an index of complexity $(M,a)$, as defined in \cite{BGH06}, which means that
$$|\ac{m\in \M, \ \dim(\S_{m})=d}|\leq M e^{ad},\quad\textrm{for all } d\geq 1.$$
If we choose, example given,
\begin{equation}\label{primaire}
\pi_{m}={e^{-(a+1/2)\dim(\S_{m})}\over \sum_{m'\in\M}e^{-(a+1/2)\dim(\S_{m'})}}\quad\textrm{and}\quad L_{m}=\dim(\S_{m})/2,
\end{equation}
   we have
$C_{m}\leq (a+1)\dim(\S_{m})+ \log\pa{3M}$.
Therefore, when $\beta$ is given by~\eref{betaopt}  and $d_{*}\leq \kappa n$ for some $\kappa<1$, we have 
$$\E\pa{{\|\mu-\hmu\|^ {2}}}\leq \pa{1+\varepsilon_{n}}\inf_{m\in\M}\ac{{{\|\mu-\Pi_{\S_{m}}\mu\|^2}+{a+1\over \beta}\dim(\S_{m})\sigma^2}}+R'_{n}\sigma^2,$$
with
$$R'_{n}={\log(3M)\over \beta}+{1\over 2\log n}+{\|\mu-\Pi_{\S_{*}}\mu\|^2\over \sigma^2}\times {(a+1)\kappa+n^{-1}\log(3M)\over \beta(1-\kappa)}\,.$$
In particular, for a given index of complexity $(M,a)$ and a given  $\kappa$, the previous bound gives an oracle inequality.

\subsection{On the choice of the parameter $\beta$.}\label{parametresortho}
The choice of the tuning parameter $\beta$ is  important in practice. 
Theorem~\ref{resultat} or Theorem~5 in \cite{LB06} justify the choice of a  $\beta$ smaller than $1/4$. Nevertheless, Bayesian arguments~\cite{H02} suggest to take a larger value for $\beta$, namely $\beta=1/2$. In this section, we discuss this issue on the example of Section~\ref{casortho}.

For the sake of simplicity, we will restrict to  the case where the variance is known. 
We consider the weights~\eref{LB} proposed by Leung and Barron, with the probability distribution $\pi$  given by~\eref{orthopoids} with  $\alpha=1$, namely\footnote{Note that this choice of $\alpha$ minimizes the rate of growth of $-\log \pi_{m}$ when $p$ goes to infinity since
$$-\log\pi_{m}=p\log\pa{1+{p^{-\alpha}}}+\alpha|m|\log p= p^{1-\alpha}+\alpha|m|\log p+o\pa{p^{1-\alpha}},$$
 when $\pi_{m}$ is given by \eref{orthopoids} with $\alpha>0$.} $$\pi_{m}=\pa{1+{p^{-1}}}^{-p}p^{-|m|}.$$
According to \eref{fLB}, the estimator $\hmu$  takes the form 
\begin{equation}\label{LBortho}
\hmu=\sum_{j=1}^p{s_{\beta}(Z_{j}/\sigma)Z_{j}}\,v_{j}\quad\textrm{with}\;Z_{j}=<Y,v_{j}>\quad\textrm{and}\;  s_{\beta}(z)={e^{\beta z^2}\over pe^{2\beta }+e^{\beta z^2}}.
\end{equation}

To start with, we note that a choice $\beta>1/2$ is not to be recommanded. Indeed, we  can compare the shrinkage coefficient $s_{\beta}(Z_{j}/\sigma)$  to a threshold at level $T=(2+\beta^{-1}\log p)\sigma^2$ since
$$s_{\beta}(Z_{j}/\sigma)\geq {1\over 2}\,{\bf 1}_{\ac{Z_{j}^2\geq T}}.$$
For $\mu=0$, the risk of $\hmu$ is then larger than a quarter of the risk of the threshold estimator $\hmu_{T}=\sum_{j=1}^p{\bf 1}_{\ac{Z_{j}^2\geq T}}Z_{j}\,v_{j}$, namely
$$\E\pa{\|0-\hmu\|^2}=\sum_{j=1}^p\E\pa{s_{\beta}(Z_{j}/\sigma)^2Z_{j}^2}\geq {1\over 4}\sum_{j=1}^p\E\pa{{\bf 1}_{\ac{Z_{j}^2\geq T}}Z_{j}^2}={1\over 4}\E\pa{\|0-\hmu_{T}\|^2}.$$
Now, when the threshold $T$ is of order $2K\log p$ with $K<1$,
 the threshold estimator is known to behave poorly for $\mu=0$, see \cite{BM01} Section 7.2.
Therefore, a choice  $\beta>1/2$ would give poor results at least  when $\mu=0$.

\begin{figure}
\centerline{\includegraphics[width=10cm]{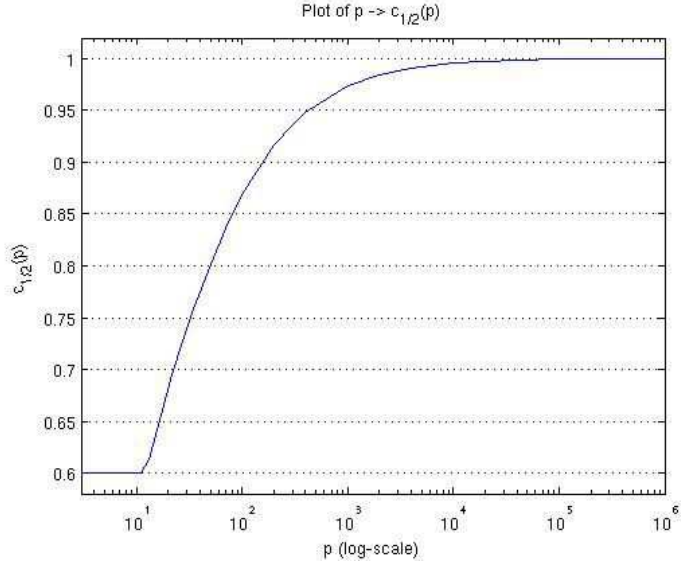}}
\caption{\label{plot} Plot of $p\mapsto c_{1/2}(p)$ }
\end{figure}
On the other hand, next proposition  justifies the use of any  $\beta\leq 1/2$ by a risk bound  similar to~\eref{foracle}.  For $p\geq 1$ and $\beta>0$, we introduce the numerical constants $\gamma_{\beta}(p)=\sqrt{2+\beta^{-1}\log p}$ and 
$$c_{\beta}(p)= \sup_{x\in\cro{0,4\gamma_{\beta}(p)}}\cro{\int_{\R}\pa{x-(x+z)s_{\beta}(x+z)}^2e^{-z^2/2}\,{dz/ \sqrt{2\pi}}\over \min(x^2,\gamma_{\beta}(p)^2)+\gamma_{\beta}(p)^2/p}\vee 0.6.$$
This constant $c_{\beta}(p)$ can be numerically computed.  For example, $c_{1/2}(p)\leq 1$ for any $3\leq p\leq 10^6$,
 see Figure~\ref{plot}.
\begin{proposition}\label{resultatvarianceconnue}
For $3\leq p\leq n$ and $\beta\in[1/4,1/2]$,
 the Euclidean risk of  the estimator~\eref{LBortho} is 
 upper bounded by
\begin{equation}\label{formulevarianceconnue}
\E\pa{\|\mu-\hmu\|^2}\leq\|\mu-\Pi_{\S_{*}}\mu\|^2+ c_{\beta}(p)\inf_{m\in\M}\cro{{\|\Pi_{\S_{*}}\mu-\Pi_{\S_{m}}\mu\|^2}+\pa{2+\beta^{-1}\log p}(|m|+1)\sigma^2}.
\end{equation}
The constant $c_{\beta}(p)$ is (crudely) bounded by 16 when $p\geq 3$ and  $\beta\in[1/4,1/2]$.
\end{proposition}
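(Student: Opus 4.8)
The plan is to exploit the orthonormality of the family $\ac{v_1,\ldots,v_p}$ to reduce the $n$-dimensional risk to a one-dimensional problem. First I would decompose the risk along the orthonormal directions. Since $\S_*=\span\ac{v_1,\ldots,v_p}$ and $\hmu=\sum_{j=1}^p s_\beta(Z_j/\sigma)Z_j\,v_j$ lies in $\S_*$, the Pythagorean theorem gives
\begin{equation*}
\|\mu-\hmu\|^2=\|\mu-\Pi_{\S_*}\mu\|^2+\sum_{j=1}^p\pa{\theta_j-s_\beta(Z_j/\sigma)Z_j}^2,
\end{equation*}
where $\theta_j=<\mu,v_j>$. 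Writing $Z_j=\theta_j+\sigma z_j$ with $z_j$ i.i.d.\ standard Gaussian, each summand is an independent one-dimensional shrinkage risk. After rescaling by $\sigma$ (set $x_j=\theta_j/\sigma$), the $j$-th expected term equals $\sigma^2\int_{\R}\pa{x_j-(x_j+z)s_\beta(x_j+z)}^2 e^{-z^2/2}\,dz/\sqrt{2\pi}$, which is exactly the numerator of the quantity defining $c_\beta(p)$.

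The core step is then a pointwise bound on this one-dimensional risk by $c_\beta(p)$ times the local complexity $\min(x_j^2,\gamma_\beta(p)^2)+\gamma_\beta(p)^2/p$, multiplied by $\sigma^2$. By the very definition of $c_\beta(p)$ as a supremum over $x\in[0,4\gamma_\beta(p)]$, this bound holds immediately whenever $|x_j|\leq 4\gamma_\beta(p)$. The work is to handle the complementary regime $|x_j|>4\gamma_\beta(p)$, where $s_\beta$ is close to $1$: here $1-s_\beta(x+z)$ is exponentially small for the bulk of the Gaussian mass, so the one-dimensional risk is bounded by a small absolute constant (at most $0.6$, matching the $\vee\,0.6$ in the definition), while the denominator $\min(x_j^2,\gamma_\beta(p)^2)+\gamma_\beta(p)^2/p\geq\gamma_\beta(p)^2\geq 2$. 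I expect this tail estimate to be the main obstacle: one must control $\E\cro{(x-(x+z)s_\beta(x+z))^2}$ uniformly for large $x$, splitting the Gaussian integral into the region where $x+z$ stays large (so $1-s_\beta$ is exponentially negligible) and the deviation region $\ac{z<-\,|x|/2}$ whose Gaussian probability is super-exponentially small and dominates the remaining polynomial-in-$x$ factors.

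Summing the pointwise bounds over $j=1,\ldots,p$ yields
\begin{equation*}
\sum_{j=1}^p\E\pa{\theta_j-s_\beta(Z_j/\sigma)Z_j}^2\leq c_\beta(p)\,\sigma^2\sum_{j=1}^p\pa{\min(x_j^2,\gamma_\beta(p)^2)+\gamma_\beta(p)^2/p}.
\end{equation*}
To recognize the right-hand side of~\eref{formulevarianceconnue}, I would bound the sum by the infimum over $m\in\M$: for a fixed subset $m$, use $\min(x_j^2,\gamma_\beta(p)^2)\leq x_j^2$ for $j\in m$ and $\min(x_j^2,\gamma_\beta(p)^2)\leq\gamma_\beta(p)^2$ for $j\notin m$. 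Since $\sigma^2\sum_{j\in m^c}x_j^2$ accounts for $\|\Pi_{\S_*}\mu-\Pi_{\S_m}\mu\|^2$ while the $\gamma_\beta(p)^2=2+\beta^{-1}\log p$ terms (there are $|m|$ of them from the capped coordinates plus the uniform $\gamma_\beta(p)^2/p$ summed $p$ times, giving the extra $+1$) produce $(2+\beta^{-1}\log p)(|m|+1)\sigma^2$, the stated bound follows after taking the infimum. Finally, the crude bound $c_\beta(p)\leq 16$ for $\beta\in[1/4,1/2]$ and $p\geq 3$ is a direct numerical estimate on the supremum defining $c_\beta(p)$, established by the same one-dimensional computations with explicit constants rather than sharp ones.
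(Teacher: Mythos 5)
Your overall architecture is exactly that of the paper's proof: Pythagoras to split off $\|\mu-\Pi_{\S_*}\mu\|^2$, reduction to the one-dimensional risk $\E\cro{\pa{x-(x+Z)s_{\beta}(x+Z)}^2}$ coordinate by coordinate, invoking the definition of $c_{\beta}(p)$ as a supremum when $|x_j|\leq 4\gamma_{\beta}(p)$, a separate tail estimate when $|x_j|>4\gamma_{\beta}(p)$, and the $\min$-decomposition summed over a model $m$. (In that last step you have a slip: you assign the cap $\gamma_{\beta}(p)^2$ to $j\notin m$ and $x_j^2$ to $j\in m$, which is backwards relative to the accounting you then correctly describe; this is typo-level.)

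The genuine problem is the tail estimate, which is precisely where the paper does its real work and which you yourself flag as the main obstacle. You claim that for $|x|>4\gamma_{\beta}(p)$ the one-dimensional risk is bounded by an absolute constant ``at most $0.6$''. This is impossible: as $x\to\infty$, $s_{\beta}(x+Z)\to 1$, so $x-(x+Z)s_{\beta}(x+Z)\to -Z$ and the risk converges to $\E(Z^2)=1$; moreover any bound obtained through $(a+b)^2\leq 2a^2+2b^2$ carries the additive term $2\,\E\pa{s_{\beta}(x+Z)^2Z^2}$, which is close to $2$, so this route can never yield a constant below $2$. With your stated numbers (risk $\leq 0.6$, denominator $\geq 2$) the arithmetic would close, but the premise is false; and with an honest bound (risk roughly $\leq 2.3$), your denominator bound $\geq 2$ is not enough, since $c_{\beta}(p)$ may equal $0.6$, which would then require risk $\leq 1.2$. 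What the paper proves instead is risk $\leq 0.6\,\gamma_{\beta}(p)^2$: it uses that in this regime $\min(x^2,\gamma^2)+\gamma^2/p\geq \gamma^2$ and, crucially, that $\gamma_{\beta}(p)^2=2+\beta^{-1}\log p\geq 2+2\log 3\approx 4.2$ (not merely $2$) for $p\geq 3$ and $\beta\leq 1/2$; the additive $2$ plus the exponentially small terms $2x^2e^{-x^2/8+\gamma^2/2}+4x^2e^{-x^2/8}$ (maximized at $x=4\gamma$) then fit under $0.6\,\gamma^2\leq c_{\beta}(p)\gamma^2$. So your plan is repairable within its own structure, but the constant you assert at the crucial step is wrong, and the repair requires both the $\gamma^2$-scaled form of the tail bound and the sharper lower bound on $\gamma^2$. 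The same remark applies to your last sentence: the bound $c_{\beta}(p)\leq 16$ is not a one-line numerical check but a separate case analysis (the paper devotes its Appendix to it).
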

We delayed the proof of Proposition \ref{resultatvarianceconnue} to Section~\ref{preuveprop2}. We also emphasize that the bound  $c_{\beta}(p)\leq 16$ is crude.

In light of the above result, $\beta=1/2$ seems to be a good choice  in this case
and corresponds to the choice of Hartigan~\cite{H02}. Note that the choice $\beta=1/2$ has no reason to be a good choice in other situations. Indeed, a different choice of $\alpha$ in~\eref{orthopoids} would give different "good" values for $\beta$.  For $\alpha\geq 1$, one may check that the "good" values for $\beta$ are $\beta\leq \alpha/2$.

{\bf Remark 3.}
Proposition \ref{resultatvarianceconnue} does not provide an oracle inequality.
The Bound~\eref{formulevarianceconnue} differs from the best trade off between the bias and the variance term by a $\log p$ factor. This is unavoidable from a minimax point of view as noticed in Donoho and Johnstone \cite{DJ94}. 

{\bf Remark 4.}
A similar analysis can be done for the estimator~\eref{fasteq} when the variance is unknown. When the parameters $\alpha$ and $b$ in~\eref{fasteq} equal 1, we can justify the use of values of $\beta$ fulfilling
\begin{equation*}\label{betaortho}
\beta\leq {1\over 2}\ \phi^{-1}\pa{\log p \over n-p},\qquad \textrm{for } n>p\geq 3,
\end{equation*}
see the Appendix.

\section{Choice of the models and the weights in different settings}\label{exemples}
In this section, we propose some choices of weights in three situations: the linear regression, the estimation of functions with bounded variation and regression in Besov spaces.

\subsection{Linear regression}
We consider  the case where the signal $\mu$ depends linearly on some observed explanatory variables $x^{(1)},\ldots,x^{(p)}$, namely
$$\mu_{i}=\sum_{j=1}^p\theta_{j}x_{i}^{(j)},\quad i=1,\ldots, n.$$
The index $i$ usually corresponds to an index of the experiment or to a time of observation. The number $p$ of variables may be large, but we assume here that $p$ is bounded by $n-3$.

\subsubsection{The case of ordered variables}
In some favorable situations, the explanatory variables $x^{(1)},\ldots,x^{(p)}$ are naturally ordered. In this case, we will consider the models spanned by the $m$ first explanatory variables, with $m$ ranging from 0 to $p$. In this direction, we set $\S_{0}=\ac{0}$ and  $\S_{m}=\span\ac{x^{(1)},\ldots,x^{(m)}}$ for $m\in\ac{1,\ldots,p}$, where $x^{(j)}=(x^{(j)}_{1},\ldots,x^{(j)}_{n})'$. This collection of models is  indexed by $\M=\ac{0,\ldots,p}$ and contains one model per dimension.
Note that  $\S_{*}$ coincides here with $\S_{p}$.

We may use in this case the priors
$$\pi_{m}={e^{\alpha}-1\over e^{\alpha}-e^{-\alpha p}}\ e^{-\alpha m},\quad m=0,\ldots,p,$$
with $\alpha>0$, set $L_{m}=m/2$ and takes the value \eref{betaopt} for $\beta$, with $N_{*}=n-p$. Then, according to Theorem~\ref{resultat} the performance of our procedure is controlled by
\begin{eqnarray*}
\lefteqn{\E\pa{{\|\mu-\hmu\|^ {2}}}}\\
&\leq& \pa{1+\varepsilon_{n}}\inf_{m\in\M}\ac{{{\|\mu-\Pi_{\S_{m}}\mu\|^2}+{\bar\sigma^2\over \beta}\,{(\alpha+1/2)m}}}+1.2\,{\bar\sigma^2\over \beta}\,\log\pa{e^{\alpha}\over e^{\alpha}-1}+{\sigma^2\over 2\log n}.
\end{eqnarray*}
with $\varepsilon_{n}=(2n\log n)^{-1}$ and $\bar\sigma^2=\sigma^2+\|\mu-\Pi_{\S_{p}}\mu\|^2/(n-p)$.
As mentioned at the end of Section~\ref{generalbound}, the previous bound can be formulated as an oracle inequality  when imposing the condition $p\leq \kappa n$, for some $\kappa<1$.

\subsubsection{The case of unordered variables}
When there is no natural order on the explanatory variables, we have to consider a larger collection of models. Set some $q\leq p$
which represents the maximal number of explanatory variables we want to take into account. Then, we write $\M$ for all the subsets of $\ac{1,\ldots,p}$ of size less than $q$ and $\S_{m}=\span\ac{x^{(j)},\ j\in m}$ for any nonempty $m$. We also set $\S_{\emptyset}=\ac{0}$ and $\S_{*}=\span\ac{x^{(1)},\ldots,x^{(p)}}$. Note that the cardinality of $\M$ is of order $p^q$, so when $p$ is large the value $q$ should remain small in practice.

A possible choice for $\pi_{m}$ is
$$\pi_{m}=\cro{\binom{p}{|m|}(|m|+1)H_{q}}^{-1}\quad\textrm{with}\quad
H_{q}=\sum_{d=0}^q{1\over d+1}\ \leq\ 1+\log(q+1).$$
Again, we choose the value \eref{betaopt} for $\beta$ with $N_{*}=n-p$ and $L_{m}=|m|/2$. With this choice, 
combining the inequality $\binom{p}{|m|}\leq (e|m|/p)^{|m|}$ with Theorem \ref{resultat}
gives the following bound on the risk of the procedure
\begin{eqnarray*}
\lefteqn{\E\pa{{\|\mu-\hmu\|^ {2}}}}\\
&\leq& \cro{1+\varepsilon_{n}}\inf_{m\in\M}\ac{{{\|\mu-\Pi_{\S_{m}}\mu\|^2}+{\bar\sigma^2\over \beta}\cro{|m|\pa{3/2+\log{p\over |m|}}+\log(|m|+1)}}}\\
& &+{\sigma^2\over 2\log n}+{\bar\sigma^2\over \beta}\log\log [(q+1)e]\,,
\end{eqnarray*}
with $\varepsilon_{n}=(2n\log n)^{-1}$ and $\bar\sigma^2=\sigma^2+\|\mu-\Pi_{\S_{*}}\mu\|^2/(n-p)$.

{\bf Remark.} When the family  $\ac{x^{(1)},\ldots,x^{(p)}}$ is orthogonal and $q=p$, we fall into the setting of Section~\ref{casortho}. An alternative in this case is to use $\hmu$ given by~\eref{fasteq}, which is easy to compute numerically. 

\subsection{Estimation of BV functions}\label{exhaar}
We consider here the functional setting  
\begin{equation} \label{funct}
\mu_{i}=f(x_{i}),\quad i=1,\ldots ,n
\end{equation}
 where $f:[0,1]\to\R$ is an unknown function and $x_{1},\ldots,x_{n}$ are  $n$ deterministic points of $[0,1]$. We assume for simplicity  that $0=x_{1}<x_{2}<\cdots<x_{n}<x_{n+1}=1$ and $n=2^{J_{n}}\geq 8$.  We set $J^*=J_{n}-1$ and $\Lambda^*=\cup_{j=0}^{J^*} \Lambda(j)$ with $\Lambda(0)=\ac{(0,0)}$ and $\Lambda(j)=\ac{j}\times\ac{0,\ldots, 2^{j-1}-1}$ for $j\geq 1$. For $(j,k)\in\Lambda^*$ we define $v_{j,k}\in\R^n$ by
$$\cro{v_{j,k}}_{i}=2^{(j-1)/2}\pa{{\bf 1}_{I^+_{j,k}}(i)-{\bf 1}_{I^-_{j,k}}(i)},\ \ i=1,\ldots,n $$
with
$I^+_{j,k}=\ac{1+(2k+1)2^{-j}n,\ldots,(2k+2)2^{-j}n}$ and 
$I^-_{j,k}=\ac{1+2k2^{-j}n,\ldots,(2k+1)2^{-j}n}$. The family $\ac{v_{j,k},\ (j,k)\in\Lambda^*}$ corresponds to the image of the points $x_{1},\ldots,x_{n}$
by a Haar basis (see Section \ref{proofhaar}) and it is orthonormal 
 for the scalar product 
$$<x,y>_{n}={1\over n}\sum_{i=1}^nx_{i}y_{i}.$$
 We use the collection of models $\S_{m}={\rm span}\ac{v_{j,k},\ (j,k)\in m}$ indexed by $\M=\mathcal P(\Lambda^*)$ and  fall into the setting of Section \ref{casortho}.  We choose the distribution $\pi$ given by~\eref{orthopoids} with $p=n/2$ and $\alpha=1$. We also set $b=1$
 and take some $\beta$ fulfilling
$$\beta\leq{1\over 2}\phi^{-1}\pa{2\log(n/2)\over n}.$$
According to Proposition \ref{fast} the estimator \eref{estimateur} then takes the form
\begin{equation}\label{easy}
\hmu=\sum_{j=0}^{J^*}\sum_{k\in\Lambda(j)}\pa{{Z_{j,k}\,\exp\pa{n\beta Z_{j,k}^2/{\hat \sigma^2}}\over en/2+\exp\pa{n\beta Z_{j,k}^2/{\hat \sigma^2}}}}v_{j,k}\ ,
\end{equation}
with $Z_{j,k}=<Y,v_{j,k}>_{n}$ and
$$\hat\sigma^2=2\pa{<Y,Y>_{n}^2-\sum_{j=0}^{J^*}\sum_{k\in\Lambda(j)}Z_{j,k}^2}.$$

Next corollary gives the rate of convergence of this estimator  when $f$ has bounded variation, in terms of the norm $\|\cdot\|_{n}$ induced by the scalar product $<\cdot,\cdot >_{n}$.
\begin{corollary}\label{corhaar}
In the setting described above, there exists a numerical constant $C$ such that for any function  $f$ with bounded variation $V(f)$
$$\E\pa{\|\mu-\hmu\|^2_{n}}\leq C\max\ac{\pa{V(f)\sigma^2\log n \over n}^{2/3}\ ,\ {V(f)^2\over n}\ ,\ {\sigma^2\log n \over n}}.$$
\end{corollary}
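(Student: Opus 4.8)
The plan is to combine the general risk bound of Section~\ref{perf} with a deterministic approximation estimate for the Haar coefficients of a function of bounded variation. Since $\ac{v_{j,k}}$ is orthonormal for $<\cdot,\cdot>_{n}$ and $\S_{*}=\span\ac{v_{j,k},\ (j,k)\in\Lambda^*}$, the noise part of each empirical coefficient $Z_{j,k}=<Y,v_{j,k}>_{n}$ has variance $\sigma^2/n$, so the effective noise level in the $\|\cdot\|_{n}$ geometry is $\sigma^2/n$ rather than $\sigma^2$. First I would apply the risk bound valid for the unknown-variance estimator~\eref{easy} (the analogue of Proposition~\ref{resultatvarianceconnue} announced in Remark~4 and proved in the Appendix, which matches the value of $\beta$ chosen here) to obtain, for some numerical constant,
\[
\E\pa{\|\mu-\hmu\|^2_{n}}\ \lesssim\ \|\mu-\Pi_{\S_{*}}\mu\|_{n}^2+\inf_{m\in\M}\cro{\|\Pi_{\S_{*}}\mu-\Pi_{\S_{m}}\mu\|_{n}^2+\kappa\,(|m|+1)},
\]
where $\kappa\asymp\sigma^2\log n/n$ (the penalty $(2+\beta^{-1}\log p)\sigma^2$ rescaled by $n^{-1}$, with $p=n/2$ and $\beta$ close to $1/2$). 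Writing $\theta_{j,k}=<\mu,v_{j,k}>_{n}$, orthonormality shows that among $m$ of cardinality $D$ the infimum is attained by keeping the $D$ largest $|\theta_{j,k}|$, so the bracket reduces to the nonlinear approximation quantity $\inf_{D}\ac{e_{D}^2+\kappa(D+1)}$, with $e_{D}^2$ the sum of the squares of all but the $D$ largest coefficients.

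Next I would convert the bounded-variation hypothesis into a decay estimate. A telescoping computation bounds the unnormalised increment carried by $v_{j,k}$ by the variation $V_{j,k}$ of $f$ on the supporting dyadic block times the block half-length; after the $2^{(j-1)/2}/n$ normalisation this gives $|\theta_{j,k}|\leq 2^{-(j+1)/2}V_{j,k}$ for $j\geq 1$, and summing over the disjoint blocks at scale $j$ yields the per-scale bound $\sum_{k\in\Lambda(j)}|\theta_{j,k}|\leq 2^{-(j+1)/2}V(f)$. The same computation at the omitted finest scale $J_{n}$ (where each block carries the single increment $f(x_{2k+2})-f(x_{2k+1})$) gives $\|\mu-\Pi_{\S_{*}}\mu\|_{n}^2=\sum_{k}\theta_{J_{n},k}^2\leq (2n)^{-1}\sum_{k}(f(x_{2k+2})-f(x_{2k+1}))^2\leq V(f)^2/(2n)$, which already produces the middle term $V(f)^2/n$ of the statement.

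Finally I would run the thresholding optimisation. Keeping the constant coefficient together with all $\ac{\theta_{j,k}:\ |\theta_{j,k}|>\sqrt T}$ for a level $T\asymp\kappa$, the per-scale $\ell_{1}$ bounds give $\#\ac{k:\ |\theta_{j,k}|>\sqrt T}\leq \min\pa{2^{j-1},\,2^{-(j+1)/2}V(f)/\sqrt T}$ and $\sum_{|\theta_{j,k}|\leq\sqrt T}\theta_{j,k}^2\leq \min\pa{2^{j-1}T,\,\sqrt T\,2^{-(j+1)/2}V(f)}$. The two regimes meet at $2^{j}\asymp(V(f)/\sqrt T)^{2/3}$; summing the resulting geometric sums over $j$ gives $|m|\lesssim(V(f)/\sqrt\kappa)^{2/3}$ and discarded energy $\lesssim(V(f))^{2/3}\kappa^{2/3}$, and balancing in $T$ (at $T\asymp\kappa$) yields the bias--complexity value $(V(f))^{2/3}\kappa^{2/3}\asymp(V(f)\sigma^2\log n/n)^{2/3}$. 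Adding the residual term from the previous step and the irreducible floor $\kappa\asymp\sigma^2\log n/n$ coming from the ``$+1$'' in $|m|+1$, and taking the worst of the three regimes, gives the stated maximum. The main obstacle is this last step: one must control the thresholding error generated by the $\ell_{1}$ per-scale bounds while respecting the finite range $0\leq j\leq J^*$ of available scales, and check that when the optimal scale $2^{j}\asymp(V(f)/\sqrt\kappa)^{2/3}$ exceeds $2^{J^*}\asymp n$ the residual term $V(f)^2/n$ takes over — this case split is precisely what turns the bound into the maximum of three terms.
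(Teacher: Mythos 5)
Your proposal is correct, and its skeleton matches the paper's: invoke the unknown-variance risk bound from the Appendix (Proposition~\ref{resultatortho}, whose condition~\eref{conditionsortho} is exactly the constraint $\beta\leq\frac{1}{2}\phi^{-1}\pa{2\log(n/2)/n}$ of this setting, so a single application covers the whole admissible range of $\beta$, whereas the paper splits between Theorem~\ref{resultat} for small $\beta$ and the Appendix for larger $\beta$), then solve a deterministic bias/complexity trade-off using the per-scale $\ell^1$ bound $\sum_{k}|\theta_{j,k}|\leq 2^{-(j+1)/2}V(f)$, which is precisely the paper's Lemma~\ref{clef}. Where you genuinely diverge is the approximation step. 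The paper builds, for each $J$, a structured compression approximant \emph{\`a la} Birg\'e--Massart: keep every coefficient at scales $j<J$ and the $K_{j,J}=\lfloor(j-J+1)^{-3}2^{J-2}\rfloor$ largest at each scale $j\geq J$; Proposition~\ref{approxprop} then yields a model of dimension at most $2^J$ with squared bias of order $V(f)^2\pa{2^{-2J}\vee 2^{-J^*}}$, and the corollary follows by optimizing over the single integer $J$. You instead exploit the fact that $\M$ is the full power set of $\Lambda^*$ and threshold coefficients directly at level $\sqrt{T}$ with $T\asymp\sigma^2\log n/n$, running the standard crossover/geometric-sum computation to bound the retained cardinality by $(V(f)/\sqrt{T})^{2/3}$ and the discarded energy by $V(f)^{2/3}T^{2/3}$; the finest-scale residual $\|\mu-\Pi_{\S_{*}}\mu\|_n^2\leq V(f)^2/(2n)$ supplies the middle term of the maximum, exactly as the paper's $V(f)^22^{-J^*}$ does. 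The two routes are quantitatively equivalent here; yours is more elementary and avoids introducing the family $\tilde f_J$, while the paper's compression sets are the construction that carries over to Section~\ref{besov}, where the model collection must be thinned for complexity reasons and thresholding over a power set is no longer available. One point to tighten if you write this up: the Appendix bound involves $\bar\sigma^2=\sigma^2+\|\mu-\Pi_{\S_{*}}\mu\|^2/(n-p)$ rather than $\sigma^2$, so your $\kappa\asymp\sigma^2\log n/n$ needs the extra remark that $\|\mu-\Pi_{\S_{*}}\mu\|^2/(n-p)\lesssim V(f)^2/n$ and that the resulting contributions are absorbed into the three-term maximum (a step the paper itself passes over silently); likewise the constant $C$ is numerical only when $\beta$ is kept of order its maximal allowed value, since $1/\beta$ multiplies the penalty.
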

The proof is delayed to Section \ref{proofhaar}. The minimax rate in this setting is $(V(f)\sigma^2/n)^{2/3}$. So,  the rate of convergence of the estimator differs from the minimax rate by a $(\log n)^{2/3}$ factor. We can actually obtain a rate-minimax estimator by using a smaller collection of models similar to the one  introduced in the next section, but we lose then Formula~\eref{easy}.

\subsection{Regression on Besov space $\mathcal{B}^\alpha_{p,\infty}[0,1]$}\label{besov}
We consider again the setting \eref{funct} with $f:[0,1]\to\R$ and introduce a $L^2([0,1],dx)$-orthonormal family 
$\ac{\phi_{j,k},j\geq 0, k=1\ldots2^j}$ of compactly support wavelets with regularity $r$.
We will use models generated by finite subsets of wavelets. If we want that our estimator shares some good adaptive properties on Besov spaces, we shall introduce a family of models induced by the compression algorithm of Birg\'e and Massart \cite{BM01}. This collection turns to be slightly more intricate than the family used in the previous section. We start with some $\kappa<1$ and set
$J_{*}=\lfloor{\log(\kappa n/2)/ \log2 }\rfloor$. 
The largest approximation space we will consider is
$$\F_{*}=\textrm{span}\ac{\phi_{j,k},j=0\ldots J_*,k=1\ldots 2^j},$$
whose dimension  is bounded by $\kappa n$.
For $1\leq J\leq J_{*}$, we define
$$\M_{J}=\ac{m=\bigcup_{j=0}^{J_{*}}\ac{j}\times A_{j}, \textrm{ with } 
A_{j}\in\Lambda_{j,J}},$$
where $\Lambda_{j,J}=\ac{\ac{1,\ldots,2^j}}$ when $j\leq J-1$ and  
$$\Lambda_{j,J}=\ac{A\subset \ac{1,\ldots,2^j}:\ |A|=\lfloor{2^J/(j-J+1)^3\rfloor}},\quad
\textrm{when}\ J\leq j\leq J_{*}.$$
To $m$ in $\M=\cup_{J=1}^{J_{*}}\M_{J}$, we associate
$\F_{m}=\textrm{span}\ac{\phi_{j,k},\; (j,k)\in m}$
and define the model $\S_{m}$ by
$$\S_{m}=\ac{(f(x_{1}),\ldots,f(x_{n})),\; f\in\F_{m}}\subset \S_{*}=\ac{(f(x_{1}),\ldots,f(x_{n})),\; f\in\F_{*}}.$$
When $m\in \M_{J}$, the dimension  of $\S_{m}$ is bounded from above by
\begin{equation}\label{dimension}
\dim(\S_{m})\leq \sum_{j=0}^{J-1}2^j+\sum_{j=J}^{J_{*}}{2^J\over (j-J+1)^3}\leq 2^J\cro{1+\sum_{k=1}^{J_{*}-J+1}k^{-3}}\leq2.2\cdot 2^J
\end{equation}
and $\dim(\S_{*})\leq \kappa n$.
Note also that  the cardinality of $\M_{J}$ is
$$|M_{J}|=\prod_{j=J}^{J_{*}}\binom{2^j}{\lfloor 2^J/(j-J+1)^3\rfloor}.$$
To estimate $\mu$, we use the estimator $\hmu$ given by \eref{estimateur} with $\beta$ given by \eref{betaopt} and
$$L_{m}=1.1\cdot 2^J\quad\textrm{and} \quad\pi_{m}=\cro{2^J(1-2^{J_{*}})\prod_{j=J}^{J_{*}}\binom{2^j}{\lfloor 2^J/(j-J+1)^3\rfloor}}^{-1},\quad\textrm{for}\ m\in\M_{J}.$$

Next corollary gives the rate of convergence of the estimator $\hmu$
 when $f$ belongs to some Besov ball $\mathcal{B}^\alpha_{p,\infty}(R)$  with $1/p<\alpha<r$ (we refer to De Vore and Lorentz \cite{DVL93} for a precise definition of Besov spaces). As it is usual in this setting, we express the result in terms of the norm $\|\cdot\|^2_{n}=\|\cdot\|^2/n$ on $\R^n$.
\begin{corollary}\label{corbesov}
For any $p,R>0$ and $\alpha\in]1/p,r[$,
 there exists some constant $C$
 not depending on $n$ and $\sigma^2$
  such that the estimator $\hmu$ defined above
fulfills
$$\E\pa{\|\mu-\hat \mu\|^2_{n}}\leq C \max\ac{\pa{\sigma^2\over n}^{2\alpha/(2\alpha+1)}\ ,\ {1\over n^{2(\alpha-1/p)}}\ ,\ {\sigma^2\over n}}$$
for any $\mu$  given by \eref{funct} with  $f\in\mathcal{B}^\alpha_{p,\infty}(R)$.
\end{corollary}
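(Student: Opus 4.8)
The plan is to feed the oracle inequality~\eref{foracle} of Theorem~\ref{resultat} into a nonlinear-approximation estimate for the Birg\'e--Massart collection, and then optimize over the resolution level. First I would check that the hypotheses of Theorem~\ref{resultat} hold for the present family: the choice~\eref{betaopt} of $\beta$ makes Condition~\eref{conditions} valid, and since $\dim(\S_m)\leq 2.2\cdot 2^J$ for $m\in\M_J$ by~\eref{dimension}, the weights $L_m=1.1\cdot 2^J$ satisfy $L_m\geq\dim(\S_m)/2$. Because $d_*\leq\kappa n$ we have $N_*\geq(1-\kappa)n$, so $\log n/(N_*-2)\to0$ and hence $\beta=\tfrac14\phi^{-1}\pa{\log n/(N_*-2)}$ stays bounded away from $0$; thus $1/\beta$ is bounded by a constant depending only on $\kappa$. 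Likewise $\bar\sigma^2=\sigma^2+\|\mu-\Pi_{\S_*}\mu\|^2/N_*$ will be comparable to $\sigma^2$ once the finest-scale approximation error $\|\mu-\Pi_{\S_*}\mu\|_n^2$ is controlled below, the leftover being absorbed into the final maximum.

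Next I would bound the complexity term $C_m=L_m-\log\pi_m$ for $m\in\M_J$. From the definition of $\pi_m$ one has $-\log\pi_m=\sum_{j=J}^{J_*}\log\binom{2^j}{\lfloor2^J/(j-J+1)^3\rfloor}+\log\pa{2^J}+(\text{normalization})$. Applying $\binom{2^j}{k}\leq(e2^j/k)^k$ with $k=\lfloor2^J/(j-J+1)^3\rfloor$ and summing the resulting convergent series in $(j-J)$ gives $\sum_{j=J}^{J_*}\log\binom{2^j}{\cdot}\lesssim 2^J$, so that $C_m\leq c\,2^J$ for a numerical constant $c$; the logarithmic pieces are swept into the lower-order $\sigma^2/n$ remainder. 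Inserting this into~\eref{foracle}, dividing by $n$ to pass to $\|\cdot\|_n$, and using $1/\beta,\ \bar\sigma^2/\sigma^2=O(1)$, yields, up to the $1+\varepsilon_n$ factor and the additive remainder,
\begin{equation*}
\E\pa{\|\mu-\hmu\|_n^2}\lesssim \inf_{1\leq J\leq J_*}\ac{\inf_{m\in\M_J}\|\mu-\Pi_{\S_m}\mu\|_n^2+\frac{\bar\sigma^2}{\beta}\,\frac{2^J}{n}}+\frac{\sigma^2}{n}.
\end{equation*}

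The heart of the argument is the approximation step: for $f\in\mathcal{B}^\alpha_{p,\infty}(R)$ with $1/p<\alpha<r$, I would show $\inf_{m\in\M_J}\|\mu-\Pi_{\S_m}\mu\|_n^2\lesssim R^2\,2^{-2\alpha J}+(\text{boundary terms})$. The idea is to select $m\in\M_J$ keeping all coefficients at levels $j<J$ and, at each level $J\leq j\leq J_*$, the $\lfloor2^J/(j-J+1)^3\rfloor$ largest wavelet coefficients; the Besov condition controls the discarded coefficients through the wavelet characterization of $\mathcal{B}^\alpha_{p,\infty}(R)$, giving the $2^{-2\alpha J}$ decay by the classical compression bound of~\cite{BM01}. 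Two care points arise here, which I expect to be the main obstacle: one must pass between the continuous $f$, its $L^2([0,1])$ wavelet coefficients, and the sampled vector $\mu=(f(x_i))$ together with the empirical norm $\|\cdot\|_n$, controlling the attendant discretization error; and when $p<2$ the embedding $\mathcal{B}^\alpha_{p,\infty}\hookrightarrow L^2$ is only borderline, so the loss in passing from $\ell_p$ to $\ell_2$ sequence norms, combined with the truncation at the finest resolution $J_*$ (where $2^{J_*}\asymp n$), is precisely what produces the extra $n^{-2(\alpha-1/p)}$ term; tracking these constants uniformly in $n$ and $\sigma^2$ is the delicate bookkeeping.

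Finally I would optimize the displayed bound over $J$. Balancing the bias $R^2 2^{-2\alpha J}$ against the complexity $\sigma^2 2^J/(\beta n)$ gives the optimal scale $2^{J_{\rm opt}}\asymp(n/\sigma^2)^{1/(2\alpha+1)}$ and the leading rate $(\sigma^2/n)^{2\alpha/(2\alpha+1)}$, provided $J_{\rm opt}\leq J_*$. When $J_{\rm opt}$ would exceed $J_*$ (very small $\sigma^2/n$), the truncation term governs and contributes the $n^{-2(\alpha-1/p)}$ regime, while the additive remainder and the constraint $J\geq1$ furnish the $\sigma^2/n$ term. Collecting the three regimes produces the stated maximum, with $C$ depending only on $\alpha,p,R,\kappa$ and the wavelet family; since $1/\beta$ and $\bar\sigma^2/\sigma^2$ were shown to be bounded, all constants are indeed uniform in $n$ and $\sigma^2$, as required.
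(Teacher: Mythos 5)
Your overall architecture is the same as the paper's: verify the hypotheses of Theorem~\ref{resultat}, bound the complexity term by $C_m\lesssim 2^J$ via $\binom{2^j}{k}\leq(e2^j/k)^k$ (the paper gets $-\log\pi_m\leq 4\cdot 2^J$ exactly this way), feed a compression-type approximation bound into~\eref{foracle}, and choose $J$ so that $2^J\asymp (n/\sigma^2)^{1/(2\alpha+1)}$ capped at $J_*$ (the paper takes $J=\min\pa{J_*,\lfloor \log[\max(n/\sigma^2,1)]/((2\alpha+1)\log 2)\rfloor+1}$). However, the step you yourself flag as ``the main obstacle'' --- bounding $\inf_{m\in\M_J}\|\mu-\Pi_{\S_m}\mu\|^2_n$ --- is left unresolved, and the route you sketch for it would not work. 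A compression bound for the $L^2([0,1])$ norm of $f-\bar\Pi_{\F_m}f$ gives no control whatsoever on the empirical norm $\|\cdot\|_n$ at the design points: the residual of an $L^2$-projection can be arbitrarily large on a finite set of $x_i$'s, and no amount of ``bookkeeping'' of discretization error fixes this, since the two norms are not comparable on the relevant function class. Moreover, your proposed mechanism for the middle term (loss in the $\ell_p\to\ell_2$ embedding at the truncation level) would produce the exponent $\alpha-1/p+1/2$ for $p<2$, not the exponent $\alpha-1/p$ that appears in the statement --- a sign that the $L^2$-based route is not the one that yields this bound.

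What the paper actually uses, and what your argument is missing, is the \emph{sup-norm} approximation theorem of Birg\'e and Massart \cite{BM00}: for every $J\leq J_*$ and $f\in\mathcal{B}^\alpha_{p,\infty}(R)$ there exists $m\in\M_J$ with
$$\|f-\bar\Pi_{\F_m}f\|_{\infty}\leq \|f-\bar\Pi_{\F_*}f\|_{\infty}+\|\bar\Pi_{\F_*}f-\bar\Pi_{\F_m}f\|_{\infty}\leq C\max\pa{2^{-J_*(\alpha-1/p)},2^{-\alpha J}}.$$
This single result dissolves both of your ``care points'' at once: since $(\bar\Pi_{\F_m}f(x_1),\ldots,\bar\Pi_{\F_m}f(x_n))'\in\S_m$ and $\Pi_{\S_m}$ is the best $\|\cdot\|_n$-approximation from $\S_m$, one gets $\|\mu-\Pi_{\S_m}\mu\|^2_n\leq\|f-\bar\Pi_{\F_m}f\|^2_\infty$ with no discretization error to track, and the $2^{-J_*(\alpha-1/p)}$ term in the sup-norm bound (reflecting the embedding $\mathcal{B}^\alpha_{p,\infty}\hookrightarrow\mathcal{B}^{\alpha-1/p}_{\infty,\infty}$, not $\ell_p\to\ell_2$) is precisely what becomes $n^{-2(\alpha-1/p)}$ after squaring, since $2^{J_*}\asymp n$. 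Without replacing your $L^2$ compression step by this $L^\infty$ statement (or an equivalent pointwise control at the $x_i$'s), the proof cannot be completed as proposed.
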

The proof is delayed to Section \ref{proofbesov}.
We remind that the rate $\pa{\sigma^2/ n}^{2\alpha/(2\alpha+1)}$ is minimax in this framework, see Yang and Barron \cite{YB99}.

\section{A numerical illustration}\label{illustration}
We illustrate the use of our procedure on a numerical simulation. We start from a signal
$$f(x)=0.7\cos(x)+\cos(7x)+1.5\sin(x)+0.8\sin(5x)+0.9\sin(8x), \quad x\in[0,1]$$
which is in black in Figure~\ref{illustre}. We have $n=60$ noisy observations of this signal 
$$Y_{i}=f(x_{i})+\sigma\eps_{i},\quad i=1,\ldots,60,\qquad\textrm{(the crosses  in Figure~\ref{illustre})}$$
where $x_{i}=i/60$ and $\eps_{1},\ldots,\eps_{60}$ are 60 i.i.d.\;standard Gaussian random variables. The noise level $\sigma$ is not known ($\sigma$ equals 1 in Figure~\ref{illustre}).
\begin{figure}
     \includegraphics[width=11cm]{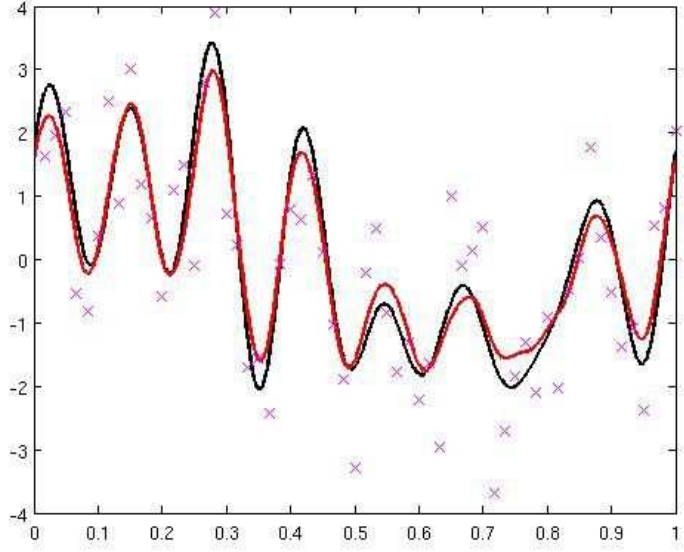}
   \caption{\label{illustre}Recovering a signal from noisy observations (the crosses). The signal is in black, the estimator is in red.}
\end{figure}
To estimate $f$ we will expand the observations on the Fourier basis 
$$\ac{1,\cos(2\pi x),\ldots,\cos(40\pi x),\sin(2\pi x),\ldots,\sin(40\pi x)}.$$
In this direction, we introduce the $p=41$ vectors $v_{1},\ldots,v_{41}$ given by
$$v_{j}=\left\{\begin{array}{ll}
\pa{\sqrt{2\over n}\,\sin(2\pi j x_{1}),\ldots,\sqrt{2\over n}\,\sin(2\pi j x_{n})}'&\textrm{when }j\in\ac{1,\ldots,20}\\
\pa{\sqrt{1\over n},\ldots,\sqrt{1\over n}}'&\textrm{when }j=21\\
\pa{\sqrt{2\over n}\,\cos(2\pi (j-21) x_{1}),\ldots,\sqrt{2\over n}\,\cos(2\pi (j-21) x_{n})}'&\textrm{when }j\in\ac{22,\ldots,41}.
\end{array}\right.$$
This vectors $\ac{v_{1},\ldots,v_{41}}$ form an orthonormal family for the usual scalar product of $\R^n$ and we fall into the setting of Section~\ref{casortho}.

We estimate $(f(x_{1}),\ldots,f(x_{n}))'$ with $\hmu$ given by~\eref{fasteq} with the parameter $\alpha=1$, $b=1$ and $\beta=1/3$. Finally, we estimate $f$ with 
$$\hat f(x)= \hat a_{0}+\sum_{j=1}^{20}\hat a_{j}\cos(2\pi j x)+\sum_{j=1}^{20}\hat b_{j}\sin(2\pi j x)\qquad\textrm{(in red in Figure~\ref{illustre})}$$
where $\hat a_{0}=\sqrt{(1/ n)}<\hmu,v_{21}>$ and 
  $\hat a_{j}=\sqrt{(2/ n)}<\hmu,v_{j+21}>$, $\ \hat b_{j}=\sqrt{(2/ n)}<\hmu,v_{j}>$ for $j=1,\ldots,20$. The plot of $\hat f$ is in red in Figure~\ref{illustre}.

\section{Proofs}
\subsection{Proof of Proposition \ref{fast}}\label{preuvefast}
Let us first express the weights $w_{m}$ in terms of the $Z_{j}$s and $\tau=\alpha\log p+b$. We use below the convention, that the sum of zero term is equal to zero. Then for any $m\in\M$, we have
\begin{eqnarray*}
w_{m}&=&{\exp\pa{\beta \|\hmu_{m}\|^2/\hat \sigma^2-(\alpha\log p+b)|m|}\over \sum_{m'\in\M}\exp\pa{\beta \|\hmu_{m'}\|^2/\hat \sigma^2-(\alpha\log p+b)|m'|}} \\
&=& {\exp\pa{\sum_{k\in m}\pa{{\beta}Z_{k}^2/ \hat \sigma^2-\tau}}\over  \sum_{m'\in\M} \exp\pa{\sum_{k\in m'}\pa{\beta Z_{k}^2/\hat \sigma^2-\tau} }}\ .
\end{eqnarray*}
We write $\M_{j}$ for the set of all the subsets of $\ac{1,\ldots,j-1,j+1,\ldots,p}$. 
Then, for any $j\in\ac{1,\ldots,p}$
\begin{eqnarray*}
c_{j}&=&{\sum_{m\in\M}\1_{j\in m} w_{m} }\\
&=&{\sum_{m\in\M}\1_{j\in m} \exp\pa{\sum_{k\in m}\pa{{\beta}Z_{k}^2/ \hat \sigma^2-\tau}}\over \sum_{m\in\M} \exp\pa{\sum_{k\in m}\pa{\beta Z_{k}^2/\hat \sigma^2-\tau} }}.
\end{eqnarray*}
Note that any subset $m\in\M$  with $j$ inside can be written as
$\ac{j}\cup m'$ with $m'\in \M_{j}$, so that
\begin{eqnarray*}
c_{j}&=&{e^{{\beta}Z_{j}^2/ \hat \sigma^2-\tau}\sum_{m'\in\M_{j}} \exp\pa{\sum_{k\in m'}\pa{{\beta}Z_{k}^2/ \hat \sigma^2-\tau}}\over e^{{\beta}Z_{j}^2/ \hat \sigma^2-\tau}\sum_{m'\in\M_{j}} \exp\pa{\sum_{k\in m'}\pa{{\beta}Z_{k}^2/ \hat \sigma^2-\tau}}  + \sum_{m\in\M_{j}} \exp\pa{\sum_{k\in m}\pa{{\beta}Z_{k}^2/ \hat \sigma^2-\tau}}  }\\
&=& {e^{{\beta}Z_{j}^2/ \hat \sigma^2-\tau}\over {e^{{\beta}Z_{j}^2/ \hat \sigma^2-\tau}}+1}.
\end{eqnarray*}
Formula \eref{fasteq} follows.

\subsection{A preliminary lemma}
\begin{lemma}\label{deviation}
Consider an integer $N$ larger than $2$ and a random variable $X$, such that $NX$ is distributed as a $\chi^2$ of dimension $N$. Then, for any  $0<a<1$, 
\begin{equation}\label{control}
\E\cro{\pa{{a}-X}_{+}}\leq\E\cro{\pa{{a\over X}-1}_{+}}\leq {2\over (1-a)(N-2)}\exp\pa{-{N}\phi(a)}
\end{equation}
with $\phi(a)={1\over 2}\pa{a-1-\log a}>{1\over 4}(1-a)^2$.
\end{lemma}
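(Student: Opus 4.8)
The plan is to prove Lemma~\ref{deviation} by establishing the two inequalities separately. The left inequality is elementary: since $NX$ is a $\chi^2$ with $N$ degrees of freedom, $X$ is almost surely positive, and on the event $\{X<a\}$ (with $0<a<1$, so $X<1$) we have $a/X>a>X$, whence $(a/X-1)_+\geq(a-X)_+$ pointwise. Taking expectations gives $\E[(a-X)_+]\leq\E[(a/X-1)_+]$. So the whole content of the lemma is the right-hand bound on $\E[(a/X-1)_+]$.

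For the main inequality I would compute $\E[(a/X-1)_+]$ directly against the $\chi^2_N$ density. Writing the density of $X=\chi^2_N/N$ explicitly, the expectation becomes an integral over $\{x<a\}$ of $(a/x-1)$ times the density. The key idea is that $\E[(a/X-1)_+]=\int_0^a (a/x-1)\,f_X(x)\,dx$, and one splits the integrand as $a/x$ minus $1$; the factor $1/x$ shifts the $\chi^2_N$ density to (a constant multiple of) the $\chi^2_{N-2}$ density, which is exactly where the restriction $N>2$ is needed for integrability. After this shift one is left comparing a $\chi^2_{N-2}$ tail probability $\p(X'<a)$ against a $\chi^2_N$ tail probability $\p(X<a)$, both at the \emph{left} tail, which is controlled by the standard Cramér/Chernoff deviation bound $\p(\chi^2_N/N<a)\leq\exp(-N\phi(a))$ for $a<1$, with $\phi(a)=(a-1-\log x)/2$. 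The factor $2/((1-a)(N-2))$ should emerge from tracking the normalizing constants $\Gamma(N/2)$ versus $\Gamma((N-2)/2)$ together with the algebraic factor $(1-a)$ coming from bounding the residual integral.

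I expect the main obstacle to be the bookkeeping that converts the shifted-density comparison into the precise constant $2/((1-a)(N-2))$ while keeping the exponent exactly $N\phi(a)$ rather than the weaker $(N-2)\phi(a)$ that a naive dimension shift would suggest. Concretely, after writing $\E[(a/X-1)_+]$ as a single integral, one should factor out $\exp(-N\phi(a))$ by recognizing $\phi$ as the large-deviation rate function for the scaled $\chi^2_N$; the technical care is in showing that integrating $(a/x-1)$ against the $\chi^2_N$ density on $(0,a)$, rather than integrating the indicator, produces the extra $1/(N-2)$ and $1/(1-a)$ gains without degrading the exponent. A clean route is to integrate by parts or to bound $(a/x-1)\le (a/x)$ and use the moment-generating/Laplace estimate $\E[X^{-1}\1_{\{X<a\}}]$ directly, since $X^{-1}$ has a finite mean precisely when $N>2$ with $\E[X^{-1}]=N/(N-2)$.

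Finally I would record the elementary analytic fact $\phi(a)=\tfrac12(a-1-\log a)>\tfrac14(1-a)^2$ for $0<a<1$, which is needed only for the last assertion of the lemma and follows from a one-variable Taylor/convexity argument: the function $g(a)=\tfrac12(a-1-\log a)-\tfrac14(1-a)^2$ satisfies $g(1)=0$, $g'(1)=0$, and $g''(a)=\tfrac1{2a^2}-\tfrac12>0$ on $(0,1)$, so $g>0$ there. This last step is routine and I would state it without belaboring the computation.
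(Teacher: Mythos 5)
Your treatment of the two side statements is fine in substance: the pointwise domination $(a/X-1)_+\geq (a-X)_+$ is true and gives the left inequality (the paper instead uses the correlation inequality $\E[g(X)f(X)]\leq\E[g(X)]\,\E[f(X)]$ for oppositely monotone $f,g$ with $g(x)=x$, $\E X=1$), and your convexity argument for $\phi(a)>\tfrac14(1-a)^2$ is correct. Note, however, that your stated justification of the domination is a non sequitur: $a/X>a>X$ does not imply $a/X-1\geq a-X$; what makes it true is the factorization $(a/x-1)-(a-x)=(a-x)(1-x)/x\geq 0$ for $0<x\leq a<1$, both positive parts vanishing for $x>a$. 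The genuine gap is in the main inequality, which you never actually derive, and the concrete routes you sketch for the "bookkeeping" cannot produce the stated constant.

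First, your fallback of bounding $(a/x-1)\leq a/x$ is fatal rather than merely crude. The density-shift identity you invoke gives exactly
\begin{equation*}
\E\cro{\frac{a}{X}\,{\bf 1}_{\ac{X\leq a}}}=\frac{aN}{N-2}\,\p\pa{\chi^2_{N-2}\leq Na},
\end{equation*}
and an endpoint Laplace estimate of the left $\chi^2$ tail together with Stirling's formula shows this quantity is of order $\sqrt{N/(4\pi)}$ \emph{times} the claimed bound ${2}e^{-N\phi(a)}/\pa{(1-a)(N-2)}$. So the majorant itself violates the lemma for moderately large $N$: the "$-1$" cannot be discarded, because the lemma's constant lives precisely in the cancellation between the two pieces of your split. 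Second, your primary route hits the same wall: after the shift you face the difference $\frac{aN}{N-2}\p(\chi^2_{N-2}\leq Na)-\p(\chi^2_{N}\leq Na)$, and Chernoff bounds both probabilities from \emph{above}, which is the wrong direction for the subtracted term; dropping it yields at best the constant $N/(N-2)$ (with the correct exponent, since $(N-2)\phi(Na/(N-2))\geq N\phi(a)+\log a$), which is weaker than the claim by a factor of order $N(1-a)/2$, while exploiting the cancellation would require a matching \emph{lower} bound on $\p(\chi^2_N\leq Na)$ that Chernoff does not supply. The paper's proof sidesteps all of this by keeping the difference intact before estimating anything: it writes $\E[(a/X-1)_+]=a\int_{1/a}^{\infty}\p(X\leq 1/t)\,dt$ — exactly what your "integrate by parts" aside produces if done \emph{first} — then applies the Chernoff bound $\p(X\leq x)\leq e^{-N\phi(x)}=x^{N/2}e^{(N/2)(1-x)}$ pointwise under the integral, substitutes $x=1/t$ to arrive at $a\int_0^a x^{N/2-2}e^{(N/2)(1-x)}\,dx$, and evaluates this by iterated integration by parts: the factor $2/(N-2)$ comes from the first step, the factor $1/(1-a)$ from dominating the resulting series by the geometric series $\sum_k a^k$, and the exponent is exactly $N\phi(a)$ because $e^{-N\phi(a)}=a^{N/2}e^{(N/2)(1-a)}$. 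As proposed — integration by parts mentioned only as an afterthought, bundled with the provably insufficient bound $(a/x-1)\leq a/x$ — your argument does not close.
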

{\bf Proof:} Remind first that when $g:\R^+\to\R^+$ and $f:\R^+\to\R^+$ have opposite monotonicity,
$$\E[g(X)f(X)]\leq \E[g(X)]\,\E[f(X)].$$
Setting $g(x)=x$ and $f(x)=(a/x-1)_{+}$ leads to the first inequality since $\E(X)=1$.

We now turn to the second inequality. To start with, we note that
\begin{eqnarray*}
\E\cro{\pa{{a\over X}-1}_{+}}&=&a\E\cro{\pa{{1\over X}-{1\over a}}_{+}}\\
&=&a\int_{1/a}^{+\infty}\p\pa{{X}\leq {1\over t}}\,dt\,.
\end{eqnarray*}
Markov inequality gives for any $\lambda\geq 0$
$$\p\pa{{X}\leq {1\over t}}\leq e^{\lambda/ t}\E\pa{e^{-\lambda X}}=e^{\lambda/ t}\pa{1+{2\lambda\over N}}^{-N/2}$$
and choosing $\lambda=N(t-1)/2$ leads to
\begin{equation}\label{fdeviation}
\p\pa{{X}\leq {1\over t}}\leq{1\over t^{N/2}}\,\exp\pa{{N\over 2}(1-1/t)}\leq \exp\pa{-N\phi(1/t)}
\end{equation}
for any $t>1$. Putting pieces together ensures the bound
\begin{eqnarray*}
\E\cro{\pa{{a\over X}-1}_{+}}&\leq &a\,\int_{1/a}^{+\infty}\exp\pa{{N\over 2}(1-1/t)}{dt\over t^{N/2}}\\
&\leq& a\,\int_{0}^{a}\exp\pa{{N\over 2}(1-x)}x^{N/2-2}{dx}
\end{eqnarray*}
for any $0<a<1$. 
Iterating integrations by parts leads to
 \begin{eqnarray*}
 \E\cro{\pa{{a\over X}-1}_{+}}&=&\exp\pa{{N\over 2}(1-a)}{2\,a^{N/2}\over N-2}\sum_{k\geq 0}{a^k(N/2)^k\over \prod_{i=0}^{k-1}(N/2+i)}\\
&\leq&\exp\pa{{N\over 2}(1-a)}{2\,a^{N/2}\over (N-2)(1-a)}\\
&\leq&{2\over (1-a)(N-2)}\exp\pa{-{N}\phi(a)}
\end{eqnarray*}
for $0<a<1$, and the bound \eref{control} follows.

\subsection{Proof of Theorem \ref{resultat}}\label{preuveresultat}
To keep formulas  short, we   write  $d_{m}$ for the dimension of $\S_{m}$, and use the following notations for the various projections
$$\hmu_{*}=\Pi_{\S_{*}}Y,\quad \mu_{*}=\Pi_{\S_{*}}\mu\quad\textrm{and}\quad \mu_{m}=\Pi_{\S_{m}}\mu,\quad m\in\M.$$
It is also convenient to write $w_{m}$ in the form
$$ w_{m}={\pi_{m}\over \Z'}\exp\pa{-\beta{\|\hmu_{*}-\hmu_{m}\|^2\over \hat \sigma^2}-{L_{m}}}$$
with $\Z'=e^{-\beta\|\hmu_{*}\|^2/\hat \sigma^2}\Z$.

By construction, the estimator $\hat \mu$ belongs to $\S_{*}$, so according to Pythagorean equality we have $\|\mu-\hat\mu\|^2=\|\mu-\mu_{*}\|^2+\|\mu_{*}-\hat\mu\|^2$. The first part is non random, and we only need to control the second part. 

According to Theorem 1 in Leung and Barron \cite{LB06}, the Stein's unbiased estimate of the $L^2$-risk $\E\pa{{\|\mu_{*}-\hmu\|^ {2}}}/\sigma^2$
of the estimator $\hat \mu$ on $\S_{*}$
can be written as
$$S\pa{\hat \mu}=\sum_{m\in\M}w_{m}\cro{{\|\hat\mu_{*}-\hmu_{m}\|^{2}\over \sigma^2}+2d_{m}-d_{*}-{\|\hmu_{m}-\hat\mu\|^{2}\over \sigma^2}+2\beta\nabla\pa{{\|\hmu_{*}-\hmu_{m}\|^{2}\over \hat\sigma^2}}.\pa{\hat \mu-\hat\mu_{m}}}.$$
When expanding the gradient into
$$\nabla\pa{{\|\hmu_{*}-\hmu_{m}\|^{2}\over \hat\sigma^2}}.\pa{\hat \mu-\hat\mu_{m}}={2\pa{\hat\mu_{*}-\hat\mu_{m}}\over \hat\sigma^2}.\pa{\hat \mu-\hat\mu_{m}}+\|\hmu_{*}-\hmu_{m}\|^{2}\nabla\pa{{1/ \hat\sigma^2}}.\pa{\hat \mu-\hat\mu_{m}},$$
 the term
$\|\hmu_{*}-\hmu_{m}\|^{2}\nabla\pa{{1/ \hat\sigma^2}}.\pa{\hat \mu-\hat\mu_{m}}$ turns to be 0, since $\nabla\pa{{1/ \hat\sigma^2}}$ is orthogonal to $\S_{*}$. Furthermore, the sum $$\sum_{m\in\M}w_{m}\pa{\hmu-\hmu_{*}}.\pa{\hmu-\hmu_{m}}$$ also equals 0, so an unbiased estimate of  $\E\pa{{\|\mu_{*}-\hmu\|^ {2}}}/\sigma^2$ is
$$S\pa{\hat \mu}=\sum_{m\in\M}w_{m}\cro{{\|\hat\mu_{*}-\hmu_{m}\|^{2}\over \sigma^2}+2d_{m}+\pa{4\beta-{\hat\sigma^2\over\sigma^2}}{\|\hmu_{m}-\hat\mu\|^{2}\over \hat\sigma^2}}-d_{*}.$$
We control the last term thanks to the upper bound
\begin{eqnarray*}
\sum_{m\in\M}w_{m}{\|\hmu_{m}-\hat\mu\|^{2}}&=&\sum_{m\in\M}w_{m}{\|\hmu_{*}-\hat\mu_{m}\|^{2}}-\|\hmu-\hat\mu_{*}\|^{2}\\
&\leq&\sum_{m\in\M}w_{m}{\|\hmu_{*}-\hat\mu_{m}\|^{2}}
\end{eqnarray*}
and get
\begin{eqnarray*}
S\pa{\hat \mu}&\leq&\cro{{\hat\sigma^2\over \sigma^2}+\pa{4\beta-{\hat\sigma^2\over\sigma^2}}_{+}}\sum_{m\in\M}w_{m}\,{\|\hat\mu_{*}-\hmu_{m}\|^{2}\over \hat\sigma^2}+2\sum_{m\in\M}w_{m}d_{m}-d_{*}\\
&\leq &\cro{{\hat\sigma^2\over \sigma^2}+\pa{4\beta-{\hat\sigma^2\over\sigma^2}}_{+}}\sum_{m\in\M}w_{m}\cro{{\|\hat\mu_{*}-\hmu_{m}\|^{2}\over \hat\sigma^2}+{L_{m}\over \beta}}-d_{*}\\
&&+\sum_{m\in\M}w_{m}\pa{2d_{m}-\cro{{\hat\sigma^2\over \sigma^2}+\pa{4\beta-{\hat\sigma^2\over\sigma^2}}_{+}}{L_{m}\over\beta}}
\end{eqnarray*}
where $(x)_+=\max(0,x)$.
First note that when $L_{m}\geq d_{m}/2$ we have
\begin{eqnarray*}
2d_{m}-\cro{{\hat\sigma^2\over \sigma^2}+\pa{4\beta-{\hat\sigma^2\over\sigma^2}}_{+}}{L_{m}\over\beta}&\leq &\cro{2-{\hat \sigma^2\over2\beta\sigma^2}-{1\over 2\beta}\pa{4\beta-{\hat\sigma^2\over\sigma^2}}_{+}} d_{m}\\
&\leq& \min\pa{0,2-{\hat \sigma^2\over2\beta\sigma^2}}d_{m}\ \leq \ 0.
\end{eqnarray*}
Therefore, setting $\hd=\pa{{4\beta\sigma^2/\hat\sigma^2}-1}_{+}$ we get 
$$S(\hmu)\leq {\hat\sigma^2\over \sigma^2}\pa{1+\hd}\sum_{m\in\M}w_{m}\cro{{\|\hat\mu_{*}-\hmu_{m}\|^{2}\over \hat\sigma^2}+{L_{m}\over \beta}}-d_{*}.$$
Let us introduce the Kullback divergence between two probability distributions $\ac{\alpha_{m},\ m\in\M}$ and $\ac{\pi_{m},\ m\in\M}$ on $\M$
$$\D(\alpha|\pi)=\sum_{m\in\M}\alpha_{m}\log {\alpha_{m}\over\pi_{m}} \ \geq \ 0$$
 and the function
$$\EL_{\beta}^{\pi}(\alpha)=\sum_{m\in\M}\alpha_{m}\cro{{\|\hat\mu_{*}-\hmu_{m}\|^{2}\over \hat\sigma^2}+{L_{m}\over \beta}}+{1\over \beta}\D(\alpha|\pi).$$
The latter function is convex on the simplex $S_{\M}^+=\ac{\alpha\in [0,1]^{|\M|}, \ \sum_{m\in \M}\alpha_{m}=1}$ and 
can be interpreted as a free energy function. Therefore, it
is minimal for the Gibbs measure $\ac{w_{m},\ m\in\M}$ and for any $\alpha\in S_{\M}^+$, 
\begin{eqnarray*}
S(\hmu)&\leq&  {\hat\sigma^2\over \sigma^2}\pa{1+\hd}\cro{\sum_{m\in\M}\alpha_{m}\cro{{\|\hat\mu_{*}-\hmu_{m}\|^{2}\over \hat\sigma^2}+{L_{m}\over \beta}}+{1\over \beta}\D(\alpha |\pi)-{1\over \beta}\D(w |\pi)}-d_{*}\\
&\leq& \pa{1+\hd}\cro{\sum_{m\in\M}\alpha_{m}\cro{{\|\hat\mu_{*}-\hmu_{m}\|^{2}\over \sigma^2}+{\hat\sigma^2\over \beta\sigma^2}L_{m}}+{\hat\sigma^2\over \beta\sigma^2}\D(\alpha |\pi)}-d_{*}.
%&\leq&\pa{1+\hd}\cro{\sum_{m\in\M}\alpha_{m}\cro{{\|\hat\mu_{*}-\hmu_{m}\|^{2}\over \sigma^2}-d_{*}+{\hat\sigma^2\over \beta\sigma^2}L_{m}}+{\hat\sigma^2\over \beta\sigma^2}\D(\alpha |\pi)}+\hd d_{*}.
\end{eqnarray*}
We fix a  probability distribution $\alpha\in S_{\M}^+$ and  take the expectation in the last inequality to get 
\begin{multline*}
\E\cro{S(\hmu)}\leq\\  \pa{1+\E(\hd)}\sum_{m\in\M}\alpha_{m}{\E\cro{\|\hat\mu_{*}-\hmu_{m}\|^{2}\over \sigma^2}}
+\E\cro{{\hat\sigma^2\over \beta\sigma^2}(1+\hd)}
\cro{\sum_{m\in\M}\alpha_{m}{L_{m}}+\D(\alpha |\pi)}- d_{*}.
\end{multline*}
Since $\hat\sigma^2/\sigma^2$ is stochastically larger than a random variable $X$ with $\chi^2(N_{*})/N_{*}$ distribution,  Lemma \ref{deviation} ensures that the two expectations
$$\E\cro{{\hat\sigma^2\over \sigma^2}\hd}=\E\cro{\pa{4\beta-{\hat\sigma^2\over\sigma^2}}_{+}}\ \textrm{ and }\ \E\cro{\hd}=\E\cro{\pa{{4\beta\sigma^2\over\hat\sigma^2}-1}_{+}}$$
 are bounded by   
$${2\over (1-4\beta)(N_{*}-2)}\exp\pa{-{N_{*}}\phi(4\beta)},$$
with $\phi(x)=(x-1-\log(x))/2$.
Furthermore, the condition $N_{*}\geq 2+(\log n) /\phi(4\beta)$ enforces 
$$ {2\over (1-4\beta)(N_{*}-2)}\exp\pa{-{N_{*}}\phi(4\beta)}\ \leq\ {2\phi(4\beta)e^{-2\phi(4\beta)}\over (1-4\beta)n\log n}\ \leq\ {1\over 2n\log n}\ =\ \varepsilon_{n}.$$
Putting pieces together, we obtain
\begin{eqnarray*}
\lefteqn{\E\cro{{\|\mu-\hmu\|^ {2}}}\over\sigma^2}\\
&=&{\|\mu-\mu_{*}\|^ {2}\over\sigma^2}+\E[S(\hmu)]\\
&\leq& {\|\mu-\mu_{*}\|^ {2}\over\sigma^2}+(1+\varepsilon_{n})\sum_{m\in\M}\alpha_{m}\E\cro{\|\hmu_{*}-\hmu_{m}\|^{2}\over \sigma^2}\\
 &&+\pa{{\bar\sigma^2\over \beta\sigma^2}+\varepsilon_{n}}\cro{\sum_{m\in\M}\alpha_{m}L_{m}+\D(\alpha |\pi)}-d_{*}\\
&\leq&{\|\mu-\mu_{*}\|^ {2}\over\sigma^2}
+(1+\varepsilon_{n})\sum_{m\in\M}\alpha_{m}\cro{{\|\mu_{*}-\mu_{m}\|^{2}\over \sigma^2}+d_{*}-d_{m}+{\bar\sigma^2\over \beta\sigma^2}(L_{m}+\D(\alpha |\pi))}-d_{*}\\
&\leq&(1+\varepsilon_{n})\cro{\sum_{m\in\M}\alpha_{m}\cro{{\|\mu-\mu_{m}\|^{2}\over \sigma^2}-d_{m}+{\bar\sigma^2\over \beta\sigma^2}L_{m}}+{\bar\sigma^2\over \beta\sigma^2}\D(\alpha |\pi)}+\varepsilon_{n}d_{*}.
\end{eqnarray*}
This inequality holds for any non-random probability distribution $\alpha\in S_{\M}^+$, so it holds in particular for the Gibbs measure
$$\alpha_{m}={\pi_{m}\over \Z_{\beta}} \exp\cro{-{\beta\over \bar\sigma^2}\pa{{\|\mu-\mu_{m}\|^{2}}-d_{m}\sigma^2}-L_{m}},\quad m\in\M $$
where $\Z_{\beta}$ normalizes the sum of the $\alpha_{m}$s to one.
For this choice of $\alpha_{m}$ we obtain
$${\E\cro{{\|\mu-\hmu\|^ {2}}}\over\sigma^2}\leq-{(1+\varepsilon_{n})\bar\sigma^2\over \beta\sigma^2}\log\cro{\sum_{m\in\M}\pi_{m}\exp\cro{-{\beta\over \bar\sigma^2}\pa{{\|\mu-\mu_{m}\|^{2}}-d_{m}\sigma^2}-L_{m}}}+\varepsilon_{n}d_{*}$$
which ensures \eref{fgibbs} since $d_{*}\leq n$. To get \eref{foracle} simply note that 
\begin{multline*}
{\sum_{m\in\M}\pi_{m}\exp\cro{-{\beta\over \bar\sigma^2}\pa{{\|\mu-\mu_{m}\|^{2}}-d_{m}\sigma^2}-L_{m}}}\\
\geq {\exp\cro{-{\beta\over \bar\sigma^2}\pa{{\|\mu-\mu_{\m}\|^{2}}-d_{\m}\sigma^2}-L_{\m}-\log \pi_{\m}}}
\end{multline*}
for any $\m\in\M$.

\subsection{Proof of Proposition \ref{resultatvarianceconnue}}\label{preuveprop2}
We use along the proof the notations $\mu_{*}=\Pi_{\S_{*}}\mu$, $\cro{x}_{+}=\max(x,0)$ and $\gamma=\gamma_{\beta}(p)=\sqrt{2+\beta^{-1}\log p}$. 
We omit the proof of the bound $c_{\beta}(p)\leq 16$ for $\beta\in[1/4,1/2]$ and $p\geq 3$. This proof (of minor interest) can be found in the Appendix. 

The risk of $\hmu$ is given by
$$\E\pa{\|\mu-\hmu\|^2}=\|\mu-\mu_{*}\|^2+\sum_{j=1}^p\E\pa{\pa{<\mu,v_{j}>-s_{\beta}(Z_{j}/\sigma)Z_{j}}^2}.$$
Note that $Z_{j}=<\mu,v_{j}>+\sigma <\eps,v_{j}>$, with $<\eps,v_{j}>$ distributed as a standard Gaussian random variable. As a consequence, when $|<\mu,v_{j}>|\leq4\gamma\sigma$ we have 
\begin{equation}\label{queue}
\E\pa{\pa{<\mu,v_{j}>-s_{\beta}(Z_{j}/\sigma)Z_{j}}^2}\leq c_{\beta}(p)\cro{\min(<\mu,v_{j}>^2,\gamma^2\sigma^2)+\gamma^2\sigma^2/p}.
\end{equation}
If we prove the same inequality for $|<\mu,v_{j}>|\geq4\gamma\sigma$, then
\begin{eqnarray*}
\E\pa{\|\mu-\hmu\|^2}&\leq&\|\mu-\mu_{*}\|^2+c_{\beta}(p)\sum_{j=1}^p\min(<\mu,v_{j}>^2,\gamma^2\sigma^2)+\gamma^2\sigma^2\\
&\leq&\|\mu-\mu_{*}\|^2+c_{\beta}(p)\inf_{m\in\M}\cro{\|\mu_{*}-\mu_{m}\|^2+\gamma^2(|m|+1)\sigma^2}.
\end{eqnarray*}
This last inequality is exactly the Bound~\eref{formulevarianceconnue}.

To conclude the proof of Proposition~\ref{resultatvarianceconnue}, we need to check that
$$\E\pa{\pa{x-s_{\beta}(x+Z)(x+Z)}^2}\leq c_{\beta}(p)\gamma^2\qquad\textrm{for }x \geq 4\gamma,$$
where $Z$ is distributed as a standard Gaussian random variable.
We first note that 
\begin{eqnarray*}
\E\pa{\pa{x-s_{\beta}(x+Z)(x+Z)}^2}&\leq& 2x^2\E\pa{\pa{1-s_{\beta}(x+Z)}^2}+2\E(s_{\beta}(x+Z)^2Z^2)\\
 &\leq& 2x^2\E\pa{\pa{1-s_{\beta}(x+Z)}^2}+2.
\end{eqnarray*}
We can bound $\pa{1-s_{\beta}(x+Z)}^2$ as follows
\begin{eqnarray*}
\pa{1-s_{\beta}(x+Z)}^2&=&\pa{{1\over 1+\exp\pa{\beta \cro{(x+Z)^2-\gamma^2}}}}^2\\
&\leq& \exp\pa{-2\beta \cro{(x+Z)^2-\gamma^2}_{+}}\\
&\leq& \exp\pa{-{1\over 2} \cro{(x+Z)^2-\gamma^2}_{+}}
\end{eqnarray*}
where the last inequality comes from $\beta\geq 1/4$. We then obtain
\begin{eqnarray*}
\E\pa{\pa{x-s_{\beta}(x+Z)(x+Z)}^2}&\leq&2+4x^2\E\pa{\exp\pa{-{1\over 2} \cro{(x-Z)^2-\gamma^2}_{+}}{\bf 1}_{Z>0}}\\
&\leq&2+4x^2\cro{\p\pa{0<Z<x/2}\exp\pa{-{1\over 2} \cro{(x/2)^2-\gamma^2}}+\p\pa{Z>x/2}}\\
&\leq& 2+2x^2\exp\pa{-x^2/8+\gamma^2/2}+4x^2\exp\pa{-x^2/8}.
\end{eqnarray*}
When $p\geq 3$ and $\beta\leq 1/2$, we have $\gamma\geq \sqrt{2+2\log 3}$ and then
$$\sup_{x>4\gamma}x^2e^{-x^2/8}=16\gamma^2\exp(-2\gamma^2).$$
Therefore, 
$$\E\pa{\pa{x-s_{\beta}(x+Z)(x+Z)}^2}\leq 2+16\gamma^2\pa{2e^{-3\gamma^2/2}+4e^{-2\gamma^2}}\leq 0.6\,\gamma^2\leq c_{\beta}(p)\gamma^2,$$
where we used again the bound $\gamma^2\geq 2+2\log 3$.
The proof of Proposition~\ref{resultatvarianceconnue} is  complete.

\subsection{Proof of Corollary \ref{corhaar}}\label{proofhaar}
We start by proving some results on the approximation of BV functions  with the Haar wavelets.
\subsubsection{Approximation of BV functions}
We stick to the setting of Section \ref{exhaar} with  $0=x_{1}<x_{2}<\cdots<x_{n}<x_{n+1}=1$ and $n=2^{J_{n}}$. For $0\leq j\leq J_{n}$ and $p\in\Lambda(j)$ we define $t_{j,p}=x_{p2^{-j}n+1}$.
We also set $\phi_{0,0}=1$ and
$$\phi_{j,k}=2^{(j-1)/2}\pa{{\bf 1}_{[t_{j,2k+1},t_{j,2k+2})}-{\bf 1}_{[t_{j,2k},t_{j,2k+1})}},\ \textrm{for }1\leq j\leq J_{n}\ \textrm{and}\ k\in\Lambda(j).$$
This family of Haar wavelets is orthonormal for the positive semi-definite quadratic form
$$(f,g)_{n}={1\over n}\sum_{i=1}^nf(x_{i})g(x_{i})$$
on functions mapping $[0,1]$ into $\R$.
For $0\leq J\leq J_{n}$, we write $f_{J}$ for the projection of $f$ onto the linear space spanned by $\ac{\phi_{j,k}, \ 0\leq j\leq J,\ k\in\Lambda(j)}$ with respect to $(\cdot,\cdot)_{n}$, namely
$$f_{J}=\sum_{j=0}^J \sum_{k\in\Lambda(j)}c_{j,k}\phi_{j,k},\quad
\textrm{with}\ c_{j,k}=(f,\phi_{j,k})_{n}.$$
We also consider for $1\leq J\leq J_{n}$ an approximation of $f$ {\it \`a la} Birg\'e and Massart \cite{BM00}
$$\tilde f_{J}=f_{J-1}+\sum_{j=J}^{J_{n}}\sum_{k\in\Lambda'_{J}(j)}c_{j,k}\phi_{j,k},$$
where $\Lambda'_{J}(j)\subset\Lambda(j)$ is the set of indices $k$ we obtain when we select the $K_{j,J}$ largest coefficients $|c_{j,k}|$ amongs  $\ac{|c_{j,k}|,\ k\in\Lambda(j)}$, with $K_{j,J}=\lfloor(j-J+1)^{-3}2^{J-2}\rfloor$ for  $1\leq J\leq j\leq J_{n}$. Note that the number of coefficients $c_{j,k}$ in $\tilde f_{J}$ is bounded from above by
 $$1+\sum_{j=1}^{J-1}2^{j-1}+\sum_{j\geq J} (j-J+1)^{-3}2^{J-2}\leq 2^{J-1}+2^{J-2}\sum_{p\geq 1}p^{-3}\leq 2^J.$$
 Next proposition states approximation bounds for $f_{J}$ and $\tilde f_{J}$ in terms of  the (semi-)norm $\|f\|^2_{n}=(f,f)_{n}$.
\begin{proposition}\label{approxprop}
When $f$ has bounded variation $V(f)$, we have
\begin{equation}\label{approxlineaire}
\|f-f_{J}\|_{n}\leq 2V(f) 2^{-J/2},\quad{\rm for}\ J\geq 0
\end{equation}
and
\begin{equation}\label{approxBM}
\|f-\tilde f_{J}\|_{n}\leq c\,V(f)2^{-J},\quad \textrm{for}\ J\geq 1
\end{equation}
with $c=\sum_{p\geq 1}p^32^{-p/2+1}$.
\end{proposition}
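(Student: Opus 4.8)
The plan is to reduce both bounds to a single pointwise estimate: the Haar coefficient $c_{j,k}=(f,\phi_{j,k})_n$ is controlled by the \emph{local} variation of $f$ over the support of $\phi_{j,k}$. Writing $g_i=f(x_i)$ and $m=2^{-j}n$, the definition of $\phi_{j,k}$ gives $c_{j,k}=\frac{2^{(j-1)/2}}{n}\sum_{l=1}^m\pa{g_{(2k+1)m+l}-g_{2km+l}}$. I would expand each bracket by telescoping, $g_{(2k+1)m+l}-g_{2km+l}=\sum_i(g_{i+1}-g_i)$ over $m$ consecutive indices, and then count how often each elementary increment $|g_{i+1}-g_i|$ occurs as $l$ ranges over $\ac{1,\ldots,m}$: each one appears at most $m$ times, and only for indices $i$ lying in the support block of $\phi_{j,k}$. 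This yields $|c_{j,k}|\le \frac{2^{(j-1)/2}}{n}\,m\,V_{j,k}=2^{-(j+1)/2}V_{j,k}$, where $V_{j,k}$ is the variation of the sampled sequence over that block. Since the blocks attached to different $k\in\Lambda(j)$ are pairwise disjoint and the grid variation is at most $V(f)$, we get $\sum_{k\in\Lambda(j)}V_{j,k}\le V(f)$.

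For the linear bound \eref{approxlineaire}, the full Haar system is an orthonormal basis of $\R^n$ for $(\cdot,\cdot)_n$, so Parseval gives $\|f-f_J\|_n^2=\sum_{j>J}\sum_{k\in\Lambda(j)}c_{j,k}^2$. Using the coefficient estimate together with $\sum_kV_{j,k}^2\le\pa{\sum_kV_{j,k}}^2\le V(f)^2$, each level contributes at most $2^{-(j+1)}V(f)^2$, and summing the geometric series over $j>J$ gives $\|f-f_J\|_n^2\le 2^{-(J+1)}V(f)^2$, which is even stronger than the stated bound.

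For the nonlinear bound \eref{approxBM}, the error equals the mass of the discarded coefficients, $\|f-\tilde f_J\|_n^2=\sum_{j\ge J}\sum_{k\in\Lambda(j)\setminus\Lambda'_J(j)}c_{j,k}^2$. I would sort the level-$j$ coefficients in decreasing modulus and use the elementary greedy inequality $|c_{(i)}|\le A_j/i$, where $A_j:=\sum_k|c_{j,k}|\le 2^{-(j+1)/2}V(f)$, so that the discarded tail obeys $\sum_{i>K_{j,J}}c_{(i)}^2\le A_j^2/K_{j,J}$. Substituting $A_j^2=2^{-(j+1)}V(f)^2$ and $K_{j,J}=\lfloor(j-J+1)^{-3}2^{J-2}\rfloor$ and setting $p=j-J+1$, each level is bounded by a constant times $2^{-2J}p^3 2^{-p}V(f)^2$; summing over $p\ge1$ produces $\|f-\tilde f_J\|_n^2\le C\,2^{-2J}V(f)^2$ with $C$ majorized by $c^2$, which is \eref{approxBM}.

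I expect two points to require care. The first is the combinatorial counting that produces the factor $m$ (hence the sharp exponent $2^{-(j+1)/2}$) in the coefficient estimate. The second is the bookkeeping around the floor in $K_{j,J}$: on levels where $K_{j,J}=0$ one keeps nothing and bounds the discarded mass directly by $A_j^2$, and the defining condition $2^{J-2}<(j-J+1)^3$ of this regime is exactly what makes $A_j^2$ fit inside the same polynomial–geometric sum, whereas on levels with $K_{j,J}\ge1$ the inequality $\lfloor x\rfloor\ge x/2$ for $x\ge1$ absorbs the floor into the constant.
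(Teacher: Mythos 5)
Your proof is correct and follows essentially the same route as the paper's: your local-variation estimate $|c_{j,k}|\le 2^{-(j+1)/2}V_{j,k}$ summed over disjoint blocks is exactly the paper's key lemma $\sum_{k\in\Lambda(j)}|c_{j,k}|\le 2^{-(j+1)/2}V(f)$, and your sorted-coefficient inequality $|c_{(i)}|\le A_j/i$ is the same greedy bound the paper writes as $(1+K_{j,J})\max_{k}|c_{j,k}|\le\sum_{k}|c_{j,k}|$ over the discarded set. The remaining differences are cosmetic: you use Parseval on squared norms where the paper uses the triangle inequality across levels (your version even gives slightly better constants), and your case split on $K_{j,J}=0$ versus $K_{j,J}\ge 1$ is sidestepped in the paper by the inequality $1+\lfloor x\rfloor\ge x$.
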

Formulaes \eref{approxlineaire} and \eref{approxBM} are based on the following fact.
\begin{lemma}\label{clef}
When $f$ has bounded variation $V(f)$, we have
$$\sum_{k\in\Lambda(j)}|c_{j,k}|\leq 2^{-(j+1)/2}V(f),\ \quad {\rm for} \ 1\leq j\leq J_{n}.$$
\end{lemma}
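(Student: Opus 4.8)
The plan is to compute $c_{j,k}$ explicitly and then exploit a careful pairing of the sample points lying in the positive and negative halves of the support of $\phi_{j,k}$. Writing $m=2^{-j}n$ for the common cardinality of $I^+_{j,k}$ and $I^-_{j,k}$, the definition of $\phi_{j,k}$ together with $c_{j,k}=(f,\phi_{j,k})_{n}$ gives
$$c_{j,k}=\frac{2^{(j-1)/2}}{n}\left(\sum_{i\in I^+_{j,k}}f(x_i)-\sum_{i\in I^-_{j,k}}f(x_i)\right).$$
Since $I^-_{j,k}=\{2km+1,\dots,(2k+1)m\}$ and $I^+_{j,k}=\{(2k+1)m+1,\dots,(2k+2)m\}$, I would pair the $l$-th point of the negative half with the $l$-th point of the positive half, which recasts this as
$$c_{j,k}=\frac{2^{(j-1)/2}}{n}\sum_{l=1}^{m}\Big(f(x_{(2k+1)m+l})-f(x_{2km+l})\Big).$$

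Applying the triangle inequality, summing over $k\in\Lambda(j)$, and exchanging the order of summation so as to sum over the offset $l$ first, I obtain
$$\sum_{k\in\Lambda(j)}|c_{j,k}|\le\frac{2^{(j-1)/2}}{n}\sum_{l=1}^{m}\ \sum_{k=0}^{2^{j-1}-1}\big|f(x_{(2k+1)m+l})-f(x_{2km+l})\big|.$$
The key observation is that for each fixed $l$ the points $x_l<x_{m+l}<x_{2m+l}<\dots<x_{(2^j-1)m+l}$ form an increasing subsequence of $x_1<\dots<x_n$ (the largest index $(2^j-1)m+l$ never exceeds $n$), and the inner sum over $k$ selects exactly the increments of $f$ along this subsequence that run from index $2km+l$ to $(2k+1)m+l$. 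These increments are pairwise non-overlapping, so their absolute values sum to at most the total variation of $f$ sampled along that increasing subsequence, which is itself bounded by $V(f)$. Hence each inner sum is at most $V(f)$.

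Substituting this bound into the double sum gives
$$\sum_{k\in\Lambda(j)}|c_{j,k}|\le\frac{2^{(j-1)/2}}{n}\,m\,V(f),$$
and recalling $m=2^{-j}n$ so that $\frac{2^{(j-1)/2}}{n}\cdot 2^{-j}n=2^{-(j+1)/2}$ yields precisely the claimed inequality. The only genuinely delicate point is the combinatorial bookkeeping in the middle step: one must verify that, for each fixed offset $l$, the selected increments are disjoint and correctly ordered along the increasing subsequence, so that summing their moduli costs at most one full copy of $V(f)$ rather than several. Once this pairing is set up, the remaining manipulations are routine.
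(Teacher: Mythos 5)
Your proof is correct, and it follows a genuinely different route from the paper's. The paper reduces to the case of a non-decreasing $f$ (``for simplicity''; the general case rests on the unstated Jordan decomposition $f=f_1-f_2$ into monotone parts, the linearity of $c_{j,k}$ in $f$, and the additivity $V(f)=V(f_1)+V(f_2)$). Monotonicity then lets it bound the sum of $f(x_i)$ over $I^+_{j,k}$ by $|I^+_{j,k}|$ times its largest value and the sum over $I^-_{j,k}$ by $|I^-_{j,k}|$ times its smallest, yielding the per-coefficient bound $|c_{j,k}|\le 2^{-(j+1)/2}\bigl[f(x_{(2k+2)2^{-j}n})-f(x_{2k\cdot 2^{-j}n})\bigr]$, after which the claim follows by telescoping over $k$. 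You instead keep $f$ arbitrary and pair the $l$-th point of $I^-_{j,k}$ with the $l$-th point of $I^+_{j,k}$, writing $c_{j,k}$ exactly as a sum of $m=2^{-j}n$ increments; after the triangle inequality and an exchange of sums, the increments attached to a fixed offset $l$ run over the pairwise disjoint, correctly ordered intervals $[x_{2km+l},x_{(2k+1)m+l}]$, $k=0,\dots,2^{j-1}-1$, so their absolute values sum to at most $V(f)$, and summing over the $m$ offsets produces the factor $2^{(j-1)/2}m/n=2^{-(j+1)/2}$. Your argument buys self-containedness: it needs no monotone reduction and handles a general BV function directly, which is precisely the step the paper glosses over. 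The paper's argument buys brevity once monotonicity is granted, and its per-block bound is slightly more transparent. Both exploit the same underlying cancellation --- disjointness of the relevant intervals in $[0,1]$ caps the accumulated variation by a single copy of $V(f)$ --- but they slice the double sum differently: the paper block by block in $k$, you offset by offset in $l$.
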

{\it Proof of the Lemma.} We assume for simplicity that $f$ is non-decreasing. Then, we have
$$c_{j,k}=<f,\phi_{j,k}>_{n}= {2^{(j-1)/2}\over n}\cro{\sum_{i\in I^+_{j,k}}f(x_{i})-\sum_{i\in I^-_{j,k}}f(x_{i})},$$
with $I^+_{j,k}$ and $I^-_{j,k}$ defined in Section \ref{exhaar}. Since 
$|I^+_{j,k}|=2^{-j}n$ and $f$ is non-decreasing
\begin{eqnarray*}
|c_{j,k}|&\leq& {2^{(j-1)/2}\over n} |I^+_{j,k}|\cro{f(x_{(2k+2)2^{-j}n})-f(x_{(2k)2^{-j}n})}\\
&\leq& 2^{-(j+1)/2}\cro{f(x_{(2k+2)2^{-j}n})-f(x_{(2k)2^{-j}n})},
\end{eqnarray*}
and Lemma \ref{clef} follows. \hfill $\square$

We first prove \eref{approxlineaire}. Since the $\ac{\phi_{j,k},\ k\in\lambda(j)}$ have disjoint supports, we have for $0\leq J\leq J_{n}$
\begin{eqnarray*}
\|f-f_{J}\|_{n}&\leq& \sum_{j>J}\|\sum_{k\in\Lambda(j)}c_{j,k}\phi_{j,k}\|_{n}\\
&\leq& \sum_{j>J}\cro{\sum_{k\in\Lambda(j)}|c_{j,k}|^2\underbrace{\|\phi_{j,k}\|_{n}^2}_{=1}}^{1/2}\\
&\leq&  \sum_{j>J}\sum_{k\in\Lambda(j)}|c_{j,k}|.
\end{eqnarray*}
Formula \eref{approxlineaire} then follows from Lemma \ref{clef}.

To prove \eref{approxBM} we introduce the set $\Lambda_{J}''(j)=\Lambda(j)\setminus\Lambda_{J}'(j)$. Then, for $1\leq J\leq J_{n}$ we have
\begin{eqnarray*}
\|f-\tilde f_{J}\|_{n}&\leq&  \sum_{j=J}^{J_{n}}\cro{\sum_{k\in\Lambda_{J}''(j)}|c_{j,k}|^2\underbrace{\|\phi_{j,k}\|_{n}^2}_{=1}}^{1/2}\\
&\leq&  \sum_{j=J}^{J_{n}}\cro{\max_{k\in\Lambda_{J}''(j)}|c_{j,k}|\sum_{k\in\Lambda(j)}|c_{j,k}|}^{1/2}.
\end{eqnarray*}
The choice of $\Lambda_{J}'(j)$ enforces the inequalities
\begin{eqnarray*}
(1+K_{j,J})\max_{k\in\Lambda_{J}''(j)}|c_{j,k}|\leq\sum_{k\in\Lambda_{J}''(j)}|c_{j,k}|+\sum_{k\in\Lambda_{J}'(j)}|c_{j,k}|\leq\sum_{k\in\Lambda(j)}|c_{j,k}|.
\end{eqnarray*}
To complete the proof of Proposition \ref{approxprop}, we combine this bound with Lemma \ref{clef}:
\begin{eqnarray*}
\|f-\tilde f_{J}\|_{n}&\leq&\sum_{j\geq J}2^{-(j+1)/2}V(f)(1+K_{j,J})^{-1/2}\\
&\leq& \sum_{j\geq J}2^{-(j+1)/2}V(f)2^{-(J-2)/2}(j-J+1)^3.\\
&\leq& V(f)2^{-J}{\sum_{p\geq 1}p^32^{-p/2+1}}.
\end{eqnarray*}

\subsubsection{Proof of Corollary \ref{corhaar}}
First, note that $v_{j,k}=(\phi_{j,k}(x_{1}),\ldots,\phi_{j,k}(x_{n}))'$ for $(j,k)\in\Lambda^*$. Then,
according to \eref{approxlineaire} and \eref{approxBM} there exists  for any $0\leq J\leq J^*$  a model $m\in\M$ fufilling  $|m|\leq 2^J$ and 
\begin{eqnarray*}
\|\mu-\Pi_{\S_{m}}\mu\|_{n}^2&=&\|\mu-\Pi_{\S_{*}}\mu\|^2_{n}+\|\Pi_{\S_{*}}\mu-\Pi_{\S_{m}}\mu\|^2_{n}\\
&\leq& 2c^2 V(f)^2 \pa{2^{-J^*}\vee 2^{-2J}},
\end{eqnarray*}
with $c=\sum_{p\geq 1}p^32^{-p/2+1}$.
Putting together this approximation result with Theorem~\ref{resultat} gives
$$\E\pa{\|\mu-\hmu\|^2_{n}}\leq C \inf_{0\leq J\leq J^*} \cro{V(f)^2 \pa{2^{-J^*}\vee 2^{-2J}}+{2^J\log n\over n}\ \sigma^2}$$
for some numerical constant $C$, when 
$$\beta\leq {1\over 4}\phi^{-1}\pa{\log n\over n/2-2}.$$ This bound still holds true when
$$ {1\over 4}\phi^{-1}\pa{\log n\over n/2-2}\leq \beta\leq {1\over 2}\phi^{-1}\pa{\log (n/2)\over n/2},$$ 
see Proposition \ref{resultatortho} in the Appendix.
To conclude the proof of Corollary \ref{corhaar}, we apply the previous bound with $J$ given by the minimum between $J^*$ and the smallest integer such that
$$2^J\geq V(f)^{2/3}\pa{n\over \sigma^2\log n}^{1/3}.$$

\subsection{Proof of Corollary \ref{corbesov}}\label{proofbesov}
First, according to the inequality $\binom{n}{k}\leq(en/k)^k$ we have the bound for $m\in\M_{J}$
\begin{eqnarray}
-\log \pi_{m}&\leq& \log 2^J+\sum_{j=J}^{J_{*}}{2^J\over (j-J+1)^3}\ \log\pa{e2^{j-J+1}(j-J+1)^3}\nonumber\\
&\leq& 2^J\pa{1+\sum_{k\geq 1}k^{-3}\pa{1+3\log k+k\log 2}}\nonumber\\
&\leq& 4\cdot 2^J.\label{pbesov}
\end{eqnarray}

Second, when $f$ belongs to some Besov ball $\mathcal{B}^\alpha_{p,\infty}(R)$  with $1/p<\alpha<r$,  Birg\'e and Massart \cite{BM00} gives the following approximation results. There exists a constant $C>0$, such that for any $J\leq J_{*}$ and $f\in \mathcal{B}^\alpha_{p,\infty}(R)$, there exists $m\in\M_{J}$ fulfilling 
$$\|f-\bar \Pi_{\F_{m}}f\|_{\infty}\leq \|f-\bar \Pi_{\F_{*}}f\|_{\infty}+\|\bar \Pi_{\F_{*}}f-\bar \Pi_{\F_{m}}f\|_{\infty}\leq C\max\pa{2^{-J_{*}(\alpha-1/p)},2^{-\alpha J}}$$
where $\bar\Pi_{\F}$ denotes the orthogonal projector onto $\F$ in $L^2([0,1])$. In particular, under the previous assumptions we have
\begin{eqnarray}\label{biais}
\|\mu-\Pi_{\S_{m}}\mu\|^2_{n}&\leq& {1\over n}\sum_{i=1}^n\cro{f(x_{i})-\bar \Pi_{\F_{m}}f(x_{i})}^2\nonumber \\
&\leq& \|f-\bar \Pi_{\F_{m}}f\|^2_{\infty}\leq C^2  \max\pa{2^{-2\alpha J},2^{-2J_{*}(\alpha-1/p)}},
\end{eqnarray}
To conclude the proof of Corollary \ref{corbesov}, we combine Theorem \ref{resultat} together with \eref{dimension}, \eref{pbesov} and \eref{biais} for
$$J=\min\pa{J_{*}\ ,\ \Big\lfloor {\log [\max(n/\sigma^2,1)]\over (2\alpha+1)\log 2} \Big\rfloor+1}.$$

\newpage

\appendix
\

\vspace{0.5cm}

\centerline{\Large APPENDIX}\vspace{0.5cm}

The Appendix is devoted to the proof of the bound $c_{\beta}(p)\leq 16$ and gives  further explanations on the Remark~4, Section~\ref{parametresortho}. The results are stated in Appendix~\ref{perfortho} and the proofs (of minor interest) can be found in Appendix~\ref{preuves}.

\section{Two bounds}\label{perfortho}
\subsection{When the variance is known}
In this section, we assume that the noise level $\sigma$ is known. 
We remind that $3\leq p\leq n$ and $\ac{v_{1},\ldots,v_{p}}$ is an orthonormal family of vectors in $\R^n$. 
Next lemma gives an upper bound on the Euclidean risk of the estimator 
$$\hmu=\sum_{j=1}^p \pa{{e^{\beta Z_{j}^2/\sigma^2}\over pe^{\beta \lambda}+e^{\beta Z_{j}^2/\sigma^2}}\,Z_{j}}v_{j},\qquad\textrm{with}\ \lambda\geq 2\ \textrm{and}\ Z_{j}=<Y,v_{j}>,\; j=1,\ldots,p.$$
\begin{lemma}\label{resultatvarianceconnue2}
For any $\lambda\geq 2$ and $\beta>0$, we set 
$\gamma^2=\lambda+{\beta^{-1}}\,\log p$. 
\begin{enumerate}
\item[{\bf 1.}] For $1/4\leq \beta\leq 1/2$ and  $a\in\R$ we have the bound
\begin{equation}\label{univariate}
\E\cro{\pa{a-{\exp\pa{\beta (a+\eps)^2}\over p\exp(\beta\lambda)+\exp\pa{\beta (a+\eps)^2}}(a+\eps)}^{2}}\leq 16\cro{\min\pa{a^2,\gamma^2}+\gamma^2/p},
\end{equation}
where $\eps$ is distributed as a standard Gaussian random variable.
\item[{\bf 2.}] As a consequence, for any $0< \beta\leq 1/2$ and $\lambda\geq 2$ the risk of  the estimator $\hmu$  is 
 upper bounded by
\begin{equation}\label{formulevarianceconnue2}
\E\pa{\|\mu-\hmu\|^2}\leq 16\inf_{m\in\M}\cro{{\|\mu-\mu_{m}\|^2}+\gamma^2|m|\sigma^2+\gamma^2\sigma^2}.
\end{equation}
\end{enumerate}
\end{lemma}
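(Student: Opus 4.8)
The plan is to establish the univariate inequality \eref{univariate} of Part~1 first, and then to deduce Part~2 by a coordinatewise decomposition of the risk, exactly as \eref{queue} is used in the proof of Proposition~\ref{resultatvarianceconnue}. Writing $\mu_*=\Pi_{\S_*}\mu$ with $\S_*=\mathrm{span}\ac{v_1,\ldots,v_p}$ and $S(u)=e^{\beta u^2}/\pa{pe^{\beta\lambda}+e^{\beta u^2}}$, Pythagoras gives $\E\pa{\|\mu-\hmu\|^2}=\|\mu-\mu_*\|^2+\sum_{j=1}^p\E\cro{\pa{<\mu,v_j>-S(Z_j/\sigma)Z_j}^2}$ since $\hmu\in\S_*$. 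Factoring out $\sigma$, each summand equals $\sigma^2$ times the left-hand side of \eref{univariate} evaluated at $a=<\mu,v_j>/\sigma$, hence is bounded by $16\cro{\min\pa{<\mu,v_j>^2,\gamma^2\sigma^2}+\gamma^2\sigma^2/p}$. Summing over $j$ produces the extra term $\gamma^2\sigma^2$ from the $p$ copies of $\gamma^2\sigma^2/p$, and for any model $m\subset\ac{1,\ldots,p}$ one has $\sum_j\min\pa{<\mu,v_j>^2,\gamma^2\sigma^2}\leq\|\mu_*-\mu_m\|^2+|m|\gamma^2\sigma^2$; combining this with the trivial $\|\mu-\mu_*\|^2\leq16\|\mu-\mu_*\|^2$ and $\|\mu-\mu_m\|^2=\|\mu-\mu_*\|^2+\|\mu_*-\mu_m\|^2$ and taking the infimum over $m$ yields \eref{formulevarianceconnue2}. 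Inequality \eref{univariate} is stated for $\beta\in[1/4,1/2]$, but the same computation covers $0<\beta<1/4$ because $\gamma^2=\lambda+\beta^{-1}\log p$ only \emph{increases} as $\beta$ decreases, which compensates the gentler slope of the sigmoid; this is what lets Part~2 reach all $0<\beta\leq1/2$.

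For the univariate bound I would follow the two-regime strategy of the proof of Proposition~\ref{resultatvarianceconnue}, writing $S(u)=\pa{1+e^{-\beta(u^2-\gamma^2)}}^{-1}$ so that $\beta\gamma^2=\beta\lambda+\log p$, and exploiting the elementary majorations $S(u)\leq\min\pa{1,\,p^{-1}e^{-\beta\lambda}e^{\beta u^2}}$ and $1-S(u)\leq\min\pa{1,\,e^{-\beta(u^2-\gamma^2)}}$. By symmetry of $\eps$ and oddness of $u\mapsto S(u)u$, the risk $r(a)=\E\cro{\pa{a-S(a+\eps)(a+\eps)}^2}$ is even, so I may take $a\geq0$. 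In the large-signal regime $a>4\gamma$, I would write $a-S(a+\eps)(a+\eps)=a\pa{1-S(a+\eps)}-S(a+\eps)\eps$, use $S\leq1$ to get $r(a)\leq2a^2\E\cro{\pa{1-S(a+\eps)}^2}+2$, bound $\pa{1-S(a+\eps)}^2\leq e^{-2\beta\cro{(a+\eps)^2-\gamma^2}_+}$, and split the Gaussian integral at $\eps=-a/2$: on $\ac{\eps\geq-a/2}$ one has $(a+\eps)^2\geq a^2/4$, while $\ac{\eps<-a/2}$ has probability at most $e^{-a^2/8}$. Using $a^2\geq16\gamma^2$, $\gamma^2\geq\lambda\geq2$ and $\beta\gamma^2\geq\log p$, both resulting terms are exponentially small multiples of $\gamma^2$, giving $r(a)\leq16\gamma^2=16\min(a^2,\gamma^2)$.

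In the regime $a\leq4\gamma$ I would start from $r(a)\leq2a^2\E\cro{\pa{1-S(a+\eps)}^2}+2\E\cro{S(a+\eps)^2\eps^2}$ and treat the two terms separately. For the bias term: when $a^2\leq\gamma^2$ bound $\pa{1-S}^2\leq1$ to obtain $\leq2a^2=2\min(a^2,\gamma^2)$, while when $\gamma^2<a^2\leq16\gamma^2$ the average $\E\cro{(1-S)^2}$ is exponentially small (split again and use $\beta\gamma^2\geq\log p$), so $2a^2\E\cro{(1-S)^2}\leq16\gamma^2$. For the variance term I would split at $(a+\eps)^2=\gamma^2$: on the tail $\ac{(a+\eps)^2>\gamma^2}$ use $S\leq1$ and the Gaussian moment tail, invoking $\gamma^2\geq2\log p$ (valid since $\beta\leq1/2$) so that $e^{-\gamma^2/2}\leq1/p$ and the contribution is $\lesssim\gamma^2/p$; on the bulk $\ac{(a+\eps)^2\leq\gamma^2}$ use $S^2\leq S\leq p^{-1}e^{-\beta\lambda}e^{\beta(a+\eps)^2}$ and integrate against the Gaussian weight, obtaining a contribution of order $1/p$ when $a$ is small and of order one when $a\gtrsim1$, which in either case is absorbed into $16\cro{\min(a^2,\gamma^2)+\gamma^2/p}$ (since $\min(a^2,\gamma^2)\geq1$ once $a\geq1$).

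The main obstacle is the moderate regime $a\asymp\gamma$ in Part~1, where neither the bias nor the variance term is negligible and both must be controlled at once, together with the bulk estimate of the variance term near $\beta=1/2$: there the single majorant $\E\cro{e^{\beta(a+\eps)^2}\eps^2}$ diverges, so one genuinely has to separate the bulk $\ac{(a+\eps)^2\leq\gamma^2}$, where $S$ is of order $1/p$, from the tail $\ac{(a+\eps)^2>\gamma^2}$, where only $S\leq1$ is available, rather than using one exponential bound throughout. Once the case analysis is organized this way, checking that the accumulated numerical constants all fit under $16$ is routine bookkeeping with considerable room to spare, as the computation behind Proposition~\ref{resultatvarianceconnue} already indicates.
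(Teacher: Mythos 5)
Your reduction of Part~2 to Part~1 --- Pythagoras, coordinatewise summation, and the estimate $\sum_{j}\min\pa{<\mu,v_{j}>^2,\gamma^2\sigma^2}\leq\|\Pi_{\S_{*}}\mu-\Pi_{\S_{m}}\mu\|^2+|m|\gamma^2\sigma^2$ --- is exactly the paper's, and your large- and moderate-signal regimes are in substance the paper's cases $a>\sqrt{3}\gamma$ and $1\leq a\leq\sqrt{3}\gamma$. The genuine gap is the sentence asserting that ``the same computation covers $0<\beta<1/4$''. That sentence is the only way your proposal reaches Part~2 on $(0,1/4)$, and it is precisely where the paper does something different: for $\beta\leq 1/4$ the paper does not use the univariate inequality at all, but invokes a slight variation of Theorem~5 of Leung and Barron \cite{LB06}, i.e.\ the Stein-identity mixing bound, which is valid exactly in the regime $\beta\leq 1/4$. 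Moreover, ``the same computation'' does not in fact extend below $1/4$: in the small-$a$ regime the decisive estimate is the bulk Gaussian integral $e^{-2\beta\gamma^2}\int_0^{\gamma-a}x^2e^{2\beta(a+x)^2-x^2/2}\,dx$, which the paper controls by (i) observing that the exponent has derivative $4\beta a+(4\beta-1)x\geq 0$, hence is maximal at the right endpoint, and (ii) an integration by parts producing the factor $(4\beta-1)^{-1}$; both devices require $\beta\geq 1/4$, and for $\beta<1/4$ the maximum of the exponent moves to an interior point. The inequality \eref{univariate} may well remain true for $\beta<1/4$, but there is no monotonicity shortcut: as $\beta$ decreases, both the shrinkage function and the right-hand side change, and your heuristic that the growth of $\gamma^2$ ``compensates the gentler slope of the sigmoid'' is an expectation, not an argument. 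As written, Part~2 is unproved on $(0,1/4)$.

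There is also a concrete quantitative flaw inside your Part~1 sketch. On the bulk you bound $S^2\leq S\leq p^{-1}e^{-\beta\lambda}e^{\beta(a+\eps)^2}$, dropping the square. Take $\beta=1/2$ and $a=0$: on $\ac{\eps^2\leq\gamma^2}$ the factor $e^{\beta(a+\eps)^2}=e^{\eps^2/2}$ exactly cancels the Gaussian density, and your majorant becomes $p^{-1}e^{-\lambda/2}(2\pi)^{-1/2}\int_{-\gamma}^{\gamma}x^2\,dx$, of order $\gamma^3/p$; this is not $O(\gamma^2/p)$ uniformly in $p$, so the constant $16$ in \eref{univariate} is not secured by your sketch even on $[1/4,1/2]$. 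The paper keeps the square, $S(u)^2\leq\exp\pa{-2\beta\cro{\gamma^2-u^2}_{+}}$, and extracts the factor $1/p$ by converting Gaussian decay into sigmoid decay, $e^{-(\gamma-a)^2/2}\leq e^{-\beta(\gamma-a)^2}\leq e^{4\beta}e^{-\beta\lambda}/p$ for $a\leq 2/\gamma$, losing only a factor $e^{4\beta}$. Note the irony: your first-power bulk bound does work for small $\beta$ (for $\beta\leq 1/4$ the integral $\E\cro{\eps^2e^{\beta(a+\eps)^2}}$ is bounded and the prefactor $p^{-1}e^{-\beta\lambda}$ already delivers $1/p$), whereas the paper's device works only for $\beta\geq 1/4$; a correct proof of the extension you claim would have to stitch these two regimes together, which is exactly the work that the one-line heuristic skips.
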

Note that~\eref{univariate} enforces the bound $c_{\beta}(p)\leq 16$. This constant 16 is certainly far from optimal. Indeed, when $\beta=1/2$, $\lambda=2$ and $3\leq p\leq 10^6$, Figure~1 in Section~\ref{parametresortho} shows that the bounds \eref{univariate} and  \eref{formulevarianceconnue2} hold  with 16 replaced by 1.

\subsection{When the variance is unknown}\label{parametresortho2}
We consider the same framework, except that  the noise level $\sigma$ is not  known.  
Next proposition provides a risk bound for the estimator 
\begin{equation}\label{fasteq2}
\hmu=\sum_{j=1}^{p}(c_{j}Z_{j})v_{j},\quad \textrm{with}\quad  Z_{j}=<Y,v_{j}>\textrm{and}\quad c_{j}={\exp\pa{\beta Z_{j}^2/{\hat \sigma^2}}\over p\exp\pa{b}+\exp\pa{\beta Z_{j}^2/{\hat \sigma^2}}}\,.
\end{equation}
We remind that 
%\begin{equation}\label{phi}
$\phi (x)=(x-1-\log x)/2\,.$
%\end{equation}

\begin{proposition}\label{resultatortho}
Assume  that $\beta$ and $p$ fulfill the conditions, 
\begin{equation}\label{conditionsortho}
p\geq 3,\quad 0<\beta< 1/2 \quad \textrm{and}\quad p+{\log p\over \phi(2\beta)}\leq n.
\end{equation}
Assume also that $b$ is not smaller than 1. Then, we have  the following upper bound on the $L^2$-risk of the estimator $\hmu$ defined by \eref{fasteq2}\begin{equation}\label{formulevarianceinconnue}
\E\pa{\|\mu-\hmu\|^2}
\leq
16\inf_{m\in\M}\cro{{\|\mu-\mu_{m}\|^2}+{1\over \beta}\pa{b+\log p}(|m|+1)\bar\sigma^2+(2+b+\log p)\sigma^2},
\end{equation}
where $\bar\sigma^2=\sigma^2+\|\mu-\Pi_{\S_{*}}\mu\|^2/(n-p)$.
\end{proposition}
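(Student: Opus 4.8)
The strategy is to reduce, conditionally on $\hat\sigma^2$, to the known-variance bound of Lemma~\ref{resultatvarianceconnue2}, and to treat separately the unlikely event that $\hat\sigma^2$ is much smaller than $\sigma^2$. Throughout I would write $a_{j}=<\mu,v_{j}>$ and $\mu_{*}=\Pi_{\S_{*}}\mu$. Since $\hmu\in\S_{*}$, Pythagoras gives $\|\mu-\hmu\|^2=\|\mu-\mu_{*}\|^2+\sum_{j=1}^p(a_{j}-c_{j}Z_{j})^2$, so it suffices to control the second term. The key structural remark is that $\hat\sigma^2$ is a function of $\Pi_{\S_{*}^\perp}Y$ whereas $(Z_{1},\dots,Z_{p})$ is a function of $\Pi_{\S_{*}}Y$; by Gaussianity these two are independent. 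I would therefore condition on $\hat\sigma^2=s^2$ and normalise by $\sigma$: each coordinate then becomes an instance of the univariate problem of Lemma~\ref{resultatvarianceconnue2} with unit-variance noise, temperature $\beta'=\beta\sigma^2/s^2$ and threshold $\lambda'=b/\beta'$ (so that $pe^{b}=pe^{\beta'\lambda'}$), for which $\gamma'^2=\lambda'+\beta'^{-1}\log p=s^2(b+\log p)/(\beta\sigma^2)$.

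On the good event $\ac{\hat\sigma^2\ge 2\beta\sigma^2}$ one has $\beta'\le 1/2$ and, since $b\ge 1$, $\lambda'=b/\beta'\ge 2b\ge 2$, so the second assertion \eref{formulevarianceconnue2} of Lemma~\ref{resultatvarianceconnue2} applies conditionally and yields
\begin{equation*}
\E\cro{\|\mu_{*}-\hmu\|^2\mid\hat\sigma^2}\le 16\inf_{m\in\M}\cro{\sum_{j\notin m}a_{j}^2+{\hat\sigma^2\over\beta}(b+\log p)(|m|+1)}.
\end{equation*}
Taking expectations, bounding the infimum by an arbitrary fixed $m$ before integrating, using $\E[\hat\sigma^2\mathbf 1_{\{\hat\sigma^2\ge 2\beta\sigma^2\}}]\le\E(\hat\sigma^2)=\bar\sigma^2$, and then taking the infimum back, produces the leading term $16\inf_{m}[\sum_{j\notin m}a_{j}^2+\beta^{-1}(b+\log p)(|m|+1)\bar\sigma^2]$, which matches the first two terms of \eref{formulevarianceinconnue}.

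It remains to handle the bad event $B=\ac{\hat\sigma^2<2\beta\sigma^2}$, which is where the real work lies. Here the lemma is unavailable, so I would use the crude split $(a_{j}-c_{j}Z_{j})^2\le 2\sigma^2<\eps,v_{j}>^2+2(1-c_{j})^2Z_{j}^2$ together with the elementary deterministic inequality $\sup_{z}(1-c_{j}(z))^2z^2\le \gamma_{s}^2:=s^2(b+\log p)/\beta$; the whole point of this last bound is that it is free of the signal $a_{j}$, which prevents any bias blow-up. Summing over $j$ then gives a conditional expected error of at most $p\sigma^2(2+4(b+\log p))$ on $B$ (using $\gamma_{s}^2\le 2\sigma^2(b+\log p)$ there). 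Finally, since $\hat\sigma^2/\sigma^2$ stochastically dominates a $\chi^2(N_{*})/N_{*}$ variable, the deviation bound \eref{fdeviation} of Lemma~\ref{deviation} gives $\p(B)\le e^{-N_{*}\phi(2\beta)}$, which is at most $1/p$ thanks to the hypothesis $N_{*}=n-p\ge \log p/\phi(2\beta)$; hence the contribution of $B$ is at most $\sigma^2(2+4(b+\log p))$.

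Adding the two events and $\|\mu-\mu_{*}\|^2$, and using $\|\mu-\mu_{m}\|^2=\|\mu-\mu_{*}\|^2+\sum_{j\notin m}a_{j}^2$ together with the crude comparisons $1\le 16$ and $2+4(b+\log p)\le 16(2+b+\log p)$, yields \eref{formulevarianceinconnue}. The main obstacle is precisely the control of $B$: establishing the signal-free bound on $(1-c_{j})^2z^2$ and pairing the resulting $O(p)$ conditional error with the exponentially small probability $\p(B)$, while checking that the good-event threshold $2\beta\sigma^2$ is exactly what forces $\beta'\le 1/2$ so that the known-variance lemma becomes applicable.
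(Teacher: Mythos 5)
Your proof is correct, and its skeleton coincides with the paper's: you exploit the independence of $\hat\sigma^2$ and $(Z_{1},\ldots,Z_{p})$, condition on $\hat\sigma^2$, rewrite the weights with the effective temperature $\hat\beta=\beta\sigma^2/\hat\sigma^2$ and threshold $\hat\lambda=b/\hat\beta$, apply Lemma~\ref{resultatvarianceconnue2} on the event $\ac{\hat\sigma^2\geq 2\beta\sigma^2}=\ac{\hat\beta\leq 1/2}$ (checking $\hat\lambda\geq 2$ via $b\geq 1$, exactly as the paper does), and kill the complementary event with the deviation bound \eref{fdeviation}, the hypothesis $n-p\geq \log p/\phi(2\beta)$ giving precisely $p\,\p\pa{\hat\sigma^2<2\beta\sigma^2}\leq 1$. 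Where you genuinely depart from the paper is on the bad event $B=\ac{\hat\sigma^2<2\beta\sigma^2}$. The paper proves a dedicated lemma (inequality \eref{residu}): for $\hat\beta>1/2$ the per-coordinate risk is at most $6\hat\beta^{-1}(b+\log p)\sigma^2+36\sigma^2$, and its proof requires a case analysis on the size of the signal coordinate and Gaussian tail computations mimicking the large-signal case of the known-variance proof. You replace all of this by the split $(a_{j}-c_{j}Z_{j})^2\leq 2\sigma^2<\eps,v_{j}>^2+2(1-c_{j})^2Z_{j}^2$ together with the deterministic, signal-free bound $\sup_{z}(1-c_{j}(z))^2z^2\leq \gamma_{s}^2=s^2(b+\log p)/\beta$; the latter holds because $1-c_{j}(z)\leq \pa{1+e^{\beta(z^2-\gamma_{s}^2)/s^2}}^{-1}$ and $u\mapsto u\,e^{-2\beta(u-\gamma_{s}^2)/s^2}$ is decreasing on $[\gamma_{s}^2,\infty)$ as soon as $2(b+\log p)\geq 1$, which your hypothesis $b\geq 1$ guarantees. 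This route is simpler (no probabilistic estimate is needed on $B$ beyond $\E<\eps,v_{j}>^2=1$) and yields the sharper per-coordinate constant $2\sigma^2+4(b+\log p)\sigma^2$ in place of the paper's $36\sigma^2+12(b+\log p)\sigma^2$; both are then absorbed into $16(2+b+\log p)\sigma^2$. One small imprecision to fix in the write-up: the centered inequality you attribute to \eref{formulevarianceconnue2}, with $\|\mu_{*}-\hmu\|^2$ on the left and $\sum_{j\notin m}a_{j}^2$ on the right, is not literally the second assertion of the lemma (which carries $\|\mu-\hmu\|^2$ and $\|\mu-\mu_{m}\|^2$), and deriving it from the per-coordinate bound \eref{univariate} is only licensed there for $\hat\beta\geq 1/4$, whereas on your good event $\hat\beta$ may be arbitrarily small. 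The harmless remedy is to use \eref{formulevarianceconnue2} in its uncentered form, as the paper does: since your final step multiplies $\|\mu-\mu_{m}\|^2$ by $16$ anyway, the bias term $\|\mu-\mu_{*}\|^2$ is absorbed identically and \eref{formulevarianceinconnue} follows with the same constants.
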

We postpone the proof of Proposition \ref{resultatortho} to Section \ref{preuveT2}.

\section{Proofs}\label{preuves}
\subsection{Proof of Lemma \ref{resultatvarianceconnue2}}\label{preuveprop2full}
When $\beta\leq 1/4$ the bound~\eref{formulevarianceconnue2}  follows from a slight variation of Theorem 5 in \cite{LB06}. When  $1/4<\beta\leq 1/2$, it follows from~\eref{univariate} (after a  rescaling by $\sigma^2$ and a summation). 
So all we need is to prove \eref{univariate}. For symmetry reason, we restrict to the case $a>0$.

To prove Inequality \eref{univariate}, we first note that
\begin{eqnarray}
\lefteqn{\E\cro{\pa{a-{\exp\pa{\beta (a+\eps)^2}\over p\exp(\beta\lambda)+\exp\pa{\beta (a+\eps)^2}}(a+\eps)}^2}}\nonumber\\
&=& \E\cro{\pa{{a\over 1+p^{-1}\exp\pa{\beta \cro{(a+\eps)^2-\lambda}}}  - {\eps\over 1+ p\exp(\beta\cro{\lambda-(a+\eps)^2})}}^2}\nonumber\\
&\leq& 2a^2\,\E\cro{\pa{{1\over 1+p^{-1}\exp\pa{\beta \cro{(a+\eps)^2-\lambda}}}}^2}+2\E\cro{\pa{{\eps\over 1+ p\exp(\beta\cro{\lambda-(a+\eps)^2})}}^2}.\label{ineq1}
\end{eqnarray}
and then investigate apart the four cases $0\leq a\leq 2\gamma^{-1}$, $\,2\gamma^{-1}\leq a\leq 1$, $\,1\leq a\leq \sqrt{3}\,\gamma$ and $ a\geq \sqrt{3}\,\gamma$. Inequality \eref{univariate} will  follow from Inequalities \eref{res1}, \eref{res2}, \eref{res3} and \eref{res4}.

\subsubsection*{Case $0\leq a\leq 2\gamma^{-1}$}
From \eref{ineq1} we get
\begin{eqnarray*}
{\E\cro{\pa{a-{\exp\pa{\beta (a+\eps)^2}\over p\exp(\beta\lambda)+\exp\pa{\beta (a+\eps)^2}}(a+\eps)}^2}}
&\leq& 2a^2+2\E\cro{\pa{{\eps\over 1+ p\exp(\beta\cro{\lambda-(a+\eps)^2})}}^2}\\
&\leq& 2a^2+4\E\cro{\pa{{\eps{\bf 1}_{\eps>0}\over 1+ p\exp(\beta\cro{\lambda-(a+\eps)^2})}}^2}\\
&\leq&2a^2+ 4\E\cro{{\eps^2{\bf 1}_{\eps>0}\exp\pa{-2\beta\cro{\gamma^2-(a+\eps)^2}_{+}}}}
\end{eqnarray*}
with $\gamma^2=\lambda+\beta^{-1}\log p$. Expanding the expectation gives
\begin{eqnarray*}
\lefteqn{\E\cro{{\eps^2{\bf 1}_{\eps>0}\exp\pa{-2\beta\cro{\gamma^2-(a+\eps)^2}_{+}}}}}\\
&=& e^{-2\beta \gamma^2}\int_{0}^{\gamma-a}x^2e^{2\beta(a+x)^2-x^2/2}\,{dx \over \sqrt{2\pi}}+\int_{\gamma-a}^{\infty}x^2e^{-x^2/2}\,{dx \over \sqrt{2\pi}}
\end{eqnarray*}
Note that when $p\geq 3$, $\lambda\geq2$ and $\beta\leq 1/2$, we have
$\gamma\geq 2$. Therefore, when $0\leq a\leq 1$, an integration by parts in the second integral gives
\begin{eqnarray*}
\int_{\gamma-a}^{\infty}x^2e^{-x^2/2}\,{dx \over \sqrt{2\pi}}
&=& \cro{{-xe^{-x^2/2} \over \sqrt{2\pi}}}_{\gamma-a}^{\infty}+\int_{\gamma-a}^{\infty}e^{-x^2/2}\,{dx \over \sqrt{2\pi}}\\
&\leq& {2(\gamma-a)\over \sqrt{2\pi}}\,e^{-(\gamma-a)^2/2}.
\end{eqnarray*}
For the first integral, since $\beta> 1/4$, we have the  bound
$$e^{-2\beta \gamma^2}\int_{0}^{\gamma-a}x^2e^{2\beta(a+x)^2-x^2/2}\,{dx \over \sqrt{2\pi}}\leq e^{-(\gamma-a)^2/2}\int_{0}^{\gamma-a}x^2\,{dx \over \sqrt{2\pi}}\leq {(\gamma-a)^3\over 3\sqrt{2\pi}}\,e^{-(\gamma-a)^2/2}.$$
Besides an integration by parts also gives
\begin{eqnarray*}
\lefteqn{e^{-2\beta \gamma^2}\int_{0}^{\gamma-a}x^2e^{2\beta(a+x)^2-x^2/2}\,{dx \over \sqrt{2\pi}}}\\
&\leq&(4\beta-1)^{-1}e^{-2\beta \gamma^2}\int_{0}^{\gamma-a}(4\beta(a+x)-x)xe^{2\beta(a+x)^2-x^2/2}\,{dx \over \sqrt{2\pi}}\\
&\leq&{e^{-2\beta \gamma^2}\over (4\beta-1)\sqrt{2\pi}}\cro{xe^{2\beta(a+x)^2-x^2/2}}_{0}^{\gamma-a}={\gamma-a\over (4\beta-1)\sqrt{2\pi}}\,e^{-(\gamma-a)^2/2}.
\end{eqnarray*}
Putting pieces together, we obtain for $0\leq a\leq 1$ and $1/4< \beta\leq 1/2$
\begin{eqnarray}
\lefteqn{\E\cro{\pa{a-{\exp\pa{\beta (a+\eps)^2}\over p\exp(\beta\lambda)+\exp\pa{\beta (a+\eps)^2}}(a+\eps)}^2}}\nonumber\\
&\leq& 2a^2+{8+4\min\pa{(4\beta-1)^{-1},3^{-1}(\gamma-a)^2}\over \sqrt{2\pi}}(\gamma-a)e^{-(\gamma-a)^2/2}\nonumber\\
&\leq& 2a^2+{8+4\min\pa{(4\beta-1)^{-1},3^{-1}(\gamma-a)^2e^{-(1/2-\beta)(\gamma-a)^2}}\over \sqrt{2\pi}}(\gamma-a) e^{-\beta(\gamma-a)^2}\nonumber\\
&\leq& 2a^2+{8+4\min\pa{(4\beta-1)^{-1},\cro{3e(1/2-\beta)}^{-1}}\over \sqrt{2\pi}}\,\gamma e^{-\beta(\gamma-a)^2}\nonumber\\
&\leq& 2a^2+5.6\,\gamma\, e^{-\beta(\gamma-a)^2}\label{ineq2}.
\end{eqnarray}
Furthermore, when $0\leq a\leq 2\gamma^{-1}$, we have
 $(\gamma-a)^2\geq \gamma^2-4$. The inequalities \eref{ineq2} and 
 $\lambda \geq 2$  thus give
\begin{eqnarray}
\E\cro{\pa{a-{\exp\pa{\beta (a+\eps)^2}\over p\exp(\beta\lambda)+\exp\pa{\beta (a+\eps)^2}}(a+\eps)}^2}
&\leq& 2a^2+5.6e^{(4-\lambda)\beta}\gamma/p\nonumber\\
&\leq& 2a^2+8\,{\gamma^2/ p}\,.\label{res1}
\end{eqnarray}

\subsubsection*{Case $2\gamma^{-1}\leq a\leq 1$}
Starting from \eref{ineq2} we have for $2\gamma^{-1}\leq a\leq 1$
\begin{eqnarray}
\E\cro{\pa{a-{\exp\pa{\beta (a+\eps)^2}\over p\exp(\beta\lambda)+\exp\pa{\beta (a+\eps)^2}}(a+\eps)}^2}
&\leq& 2a^2+{5.6\,\gamma^3 e^{-\beta(\gamma-1)^2}\over \gamma^2}\nonumber\\
&\leq& 2a^2+{5.6\,\gamma^3 e^{-(\gamma-1)^2/4}\over \gamma^2}\nonumber\\
&\leq&2a^2+56\gamma^{-2}\ \leq\ 16a^2.\label{res2}
\end{eqnarray}

\subsubsection*{Case $1\leq a\leq \sqrt{3}\,\gamma $ }
From \eref{ineq1} we get 
\begin{eqnarray}
\E\cro{\pa{a-{\exp\pa{\beta (a+\eps)^2}\over p\exp(\beta\lambda)+\exp\pa{\beta (a+\eps)^2}}(a+\eps)}^2}
&\leq& 2(a^2+1)\nonumber\\
&\leq& 4a^2\ \leq\ 12\min(a^2,\gamma^2). \label{res3}
\end{eqnarray}

\subsubsection*{Case $a>\sqrt{3}\gamma$}\label{mugrand}
From \eref{ineq1} we get 
\begin{eqnarray*}
\lefteqn{\E\cro{\pa{a-{\exp\pa{\beta (a+\eps)^2}\over p\exp(\beta\lambda)+\exp\pa{\beta (a+\eps)^2}}(a+\eps)}^2}}\\
&\leq &4a^2\,\E\cro{\pa{{1\over 1+p^{-1}\exp\pa{\beta \cro{(a-\eps)^2-\lambda}}}}^2{\bf 1}_{\eps>0}}+2\\
&\leq& 4a^2\,\E\cro{\exp\pa{-2\beta \cro{(a-\eps)^2-\gamma^2}_{+}}{\bf 1}_{\eps>0}}+2\\
&\leq& 2a^2\exp\pa{-2\beta \cro{(2a/3)^2-\gamma^2}_{+}}+4a^2\p\pa{\eps>a/3}+2.
\end{eqnarray*}
Since $(2a/3)^2>a^2/9+\gamma^2$, we finally obtain
\begin{eqnarray}
\lefteqn{\E\cro{\pa{a-{\exp\pa{\beta (a+\eps)^2}\over p\exp(\beta\lambda)+\exp\pa{\beta (a+\eps)^2}}(a+\eps)}^2}}\nonumber\\
&\leq &2a^2\exp\pa{-2\beta a^2/9}+4a^2\exp\pa{-a^2/18}+2\nonumber\\
&\leq& 42\leq 11{\gamma^2}.\label{res4}
\end{eqnarray}

\subsection{Proof of Proposition \ref{resultatortho}}\label{preuveT2}
We can express the weights $c_{j}$ appearing in \eref{fasteq2} in the following way
$$c_{j}={e^{\hat\beta Z_{j}^2/\sigma^2}\over pe^{\hat\beta \hat\lambda}+e^{\hat\beta Z_{j}^2/\sigma^2}}$$
with $\hat \beta=\beta\sigma^2/\hat\sigma^2$ and $\hat\lambda=b/\hat\beta$. 
Note that $\hat\beta\leq 1/2$ enforces $\hat\lambda\geq 2$ since $b\geq1$.

Since $\hat\sigma^2$ is independent of the $Z_{j}$s, 
we can work conditionally on $\hat \sigma^2$. When $\hat\beta$ is not larger than $1/2$ we  apply Lemma \ref{resultatvarianceconnue2} and get
\begin{eqnarray}
\E\pa{\|\mu-\hmu\|^2\,|\,\hat\sigma^2}{\bf 1}_{\ac{\hat\beta\leq 1/2}}&\leq& 16\inf_{m\in\M}\cro{{\|\mu-\mu_{m}\|^2}+\pa{\hat\lambda+\hat\beta^{-1}\log p}(|m|+1)\sigma^2}{\bf 1}_{\ac{\hat\beta\leq 1/2}}\nonumber\\
&\leq&16\inf_{m\in\M}\cro{{\|\mu-\mu_{m}\|^2}+{1\over \beta}\pa{b+\log p}(|m|+1)\hat\sigma^2}{\bf 1}_{\ac{\hat\beta\leq 1/2}}\label{facile}.
\end{eqnarray}

When $\hat\beta$ is larger than $1/2$, we use the following bound.
\begin{lemma}
Write $Z$ for a Gaussian random variable with mean $a$ and variance $\sigma^2$. Then for any $\hat\lambda\geq 0$ and $\hat\beta>1/2$
\begin{equation}\label{residu}
\E\cro{\pa{a-{e^{\hat\beta Z^2/\sigma^2}\over pe^{\hat\beta \hat\lambda}+e^{\hat\beta Z^2/\sigma^2}}\, Z}^2}\leq 6 \pa{\hat \lambda+\hat\beta^{-1}\log p} \sigma^2+36\sigma^2.
\end{equation}
\end{lemma}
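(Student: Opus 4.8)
The plan is to rescale the problem to a one–dimensional bound and then exploit the exponential decay of the shrinkage factor. First I would write $Z=\sigma(m+\eps)$ with $m=a/\sigma$ and $\eps$ a standard Gaussian variable. Setting $\gamma^2=\hat\lambda+\hat\beta^{-1}\log p$ and noting that $pe^{\hat\beta\hat\lambda}=e^{\hat\beta\gamma^2}$, the shrinkage coefficient becomes $s(\eps)=\pa{1+e^{\hat\beta\cro{\gamma^2-(m+\eps)^2}}}^{-1}$ and the left-hand side of \eref{residu} equals $\sigma^2\,\E\cro{\pa{m-s(\eps)(m+\eps)}^2}$. It therefore suffices to show $\E\cro{\pa{m-s(\eps)(m+\eps)}^2}\leq 6\gamma^2+36$, and by the symmetry $(m,\eps)\mapsto(-m,-\eps)$ I may assume $m\geq 0$.

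Next I would decompose $m-s(\eps)(m+\eps)=m\pa{1-s(\eps)}-s(\eps)\eps$ and use $(u-v)^2\leq 2u^2+2v^2$ together with $0\leq s(\eps)\leq 1$ to obtain
$$\E\cro{\pa{m-s(\eps)(m+\eps)}^2}\leq 2m^2\,\E\cro{\pa{1-s(\eps)}^2}+2.$$
The elementary inequality $\pa{1+e^{t}}^{-1}\leq e^{-(t)_{+}}$, applied to $1-s(\eps)=\pa{1+e^{\hat\beta\cro{(m+\eps)^2-\gamma^2}}}^{-1}$, gives the pointwise bound $\pa{1-s(\eps)}^2\leq \exp\pa{-2\hat\beta\cro{(m+\eps)^2-\gamma^2}_{+}}$, which drives the rest of the argument.

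Then I would treat two regimes. If $m^2\leq 3\gamma^2$, I simply bound $\pa{1-s(\eps)}^2\leq 1$, so $2m^2\,\E\cro{\pa{1-s(\eps)}^2}\leq 6\gamma^2$ and the claim follows. If $m^2>3\gamma^2$, I split the expectation at $\eps=-m/3$: on $\ac{\eps>-m/3}$ one has $m+\eps>2m/3$, hence $(m+\eps)^2-\gamma^2>4m^2/9-\gamma^2>m^2/9$, and since $\hat\beta>1/2$ this yields $\pa{1-s(\eps)}^2\leq e^{-m^2/9}$; on $\ac{\eps\leq -m/3}$ I use $\pa{1-s(\eps)}^2\leq 1$ together with the Gaussian tail $\p\pa{\eps\leq -m/3}\leq e^{-m^2/18}$. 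Combining, $\E\cro{\pa{1-s(\eps)}^2}\leq e^{-m^2/9}+e^{-m^2/18}$, so using $\sup_{x\geq 0}x^2e^{-x^2/9}=9/e$ and $\sup_{x\geq 0}x^2e^{-x^2/18}=18/e$ I get $2m^2\,\E\cro{\pa{1-s(\eps)}^2}\leq 54/e<20$. Hence the left-hand side is at most $22\leq 6\gamma^2+36$ in this regime as well, and multiplying through by $\sigma^2$ proves \eref{residu}.

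The main obstacle is the regime $m>\sqrt 3\,\gamma$, where the prefactor $m^2$ grows unboundedly while $\pa{1-s(\eps)}^2$ must decay fast enough to keep the product bounded by a universal constant. This is precisely where the hypothesis $\hat\beta>1/2$ enters, through the step $e^{-2\hat\beta m^2/9}\leq e^{-m^2/9}$; without a lower bound on $\hat\beta$ the supremum $\sup_{m}m^2e^{-2\hat\beta m^2/9}=9/(2\hat\beta e)$ would not be controlled by an absolute constant. The computation parallels the case $a>\sqrt 3\,\gamma$ in the proof of Lemma~\ref{resultatvarianceconnue2}, the only difference being that here $\hat\beta$ is allowed to exceed $1/2$.
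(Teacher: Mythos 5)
Your proof is correct and follows essentially the same route as the paper's: the same split into the two regimes $m^2\leq 3\gamma^2$ and $m^2>3\gamma^2$, the same crude bound $2m^2\E[(1-s)^2]+2$ in the first regime, and in the second the same split of the Gaussian at $m/3$ combined with $\hat\beta>1/2$ to absorb the $p$-dependence and bound $m^2e^{-cm^2}$ by a universal constant (the paper arrives at $36\sigma^2$, you at $22\sigma^2$, but the mechanism is identical). The only cosmetic difference is that you carry out the large-$m$ computation explicitly, whereas the paper refers back to the corresponding case in the proof of its known-variance lemma.
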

\begin{proof}
First, we write $\eps$ for a standard Gaussian random variable and obtain
\begin{eqnarray*}
\lefteqn{\E\cro{\pa{a-{e^{\hat\beta Z^2/\sigma^2}\over pe^{\hat\beta \hat\lambda}+e^{\hat\beta Z^2/\sigma^2}}\, Z}^2}}\\
&=&\E\cro{\pa{{a\over 1+p^{-1}\exp\pa{\hat\beta \cro{(a/\sigma+\eps)^2-\hat\lambda}}}  - {\sigma\eps\over 1+ p\exp\pa{\hat\beta\cro{\hat\lambda-(a/\sigma+\eps)^2}}}}^2}\\
&\leq&2(a^2+\sigma^2).
\end{eqnarray*}
Whenever $a^2$ is smaller than $3\pa{\hat \lambda+\hat\beta^{-1}\log p} \sigma^2$, this quantity remains smaller than $6\pa{\hat \lambda+\hat\beta^{-1}\log p} \sigma^2+2\sigma^2$. 

When $a^2$ is larger than $3\pa{\hat \lambda+\hat\beta^{-1}\log p} \sigma^2$
we follow the same lines as in the last case of Section \ref{mugrand} and get
\begin{multline*}
{\E\cro{\pa{a-{e^{\hat\beta Z^2/\sigma^2}\over pe^{\hat\beta \hat\lambda}+e^{\hat\beta Z^2/\sigma^2}}\, Z}^2}}\\
\leq 2\sigma^2+4a^2\p\pa{\eps>|a|/(3\sigma)}+2a^2p^2\exp\cro{-2\hat\beta\pa{\pa{2a\over 3\sigma}^2-\hat\lambda}}.
\end{multline*}
Since $(2a/3)^2\geq a^2/9+\pa{\hat\lambda+\hat\beta^{-1}\log p}\sigma^2$, we obtain
\begin{eqnarray*}
{\E\cro{\pa{a-{e^{\hat\beta Z^2/\sigma^2}\over pe^{\hat\beta \hat\lambda}+e^{\hat\beta Z^2/\sigma^2}}\, Z}^2}}
&\leq& 2\sigma^2+4a^2\exp\pa{-a^2/(18\sigma^2)}+2a^2\exp\pa{-a^2/(9\sigma^2)}\\
&\leq& 36 \sigma^2.
\end{eqnarray*}
\end{proof}
From \eref{residu}, we obtain after summation 
\begin{equation}\label{bof}
\E\pa{\|\mu-\hmu\|^2\,|\,\hat\sigma^2}\,{\bf 1}_{\ac{\hat\beta> 1/2}}\leq  p
\cro{{6\over \beta}\pa{b+\log p}\hat\sigma^2+36\sigma^2}{\bf 1}_{\ac{\hat\beta> 1/2}}+\|\mu-\Pi_{\S_{*}}\mu\|^2{\bf 1}_{\ac{\hat\beta> 1/2}}.
\end{equation}
Futhermore $\hat\sigma^2$ is smaller than $2\beta\sigma^2$ when $\hat\beta$ is larger than $1/2$, so 
taking the expectation of \eref{bof} and \eref{facile}  gives
\begin{eqnarray*}
{\E\pa{\|\mu-\hmu\|^2}}
&\leq&
16\inf_{m\in\M}\cro{{\|\mu-\mu_{m}\|^2}+{1\over \beta}\pa{b+\log p}(|m|+1)\bar\sigma^2}\\
&+&p\cro{12\pa{b+\log p}+36}\p\pa{\hat\sigma^2<2\beta\sigma^2}\sigma^2.
\end{eqnarray*}
The random variable $\hat\sigma^2/\sigma^2$ is stochastically larger than a random variable $X$ distributed as a $\chi^2$ of dimension $n-p$ divided by $n-p$. The Lemma~\ref{deviation} gives
$$\p\pa{{X}\leq {2\beta}}\leq{(2\beta)^{N/2}}\,\exp\pa{{N\over 2}(1-2\beta)}\leq \exp\pa{-N\phi(2\beta)}$$
and then 
$\p\pa{\hat\sigma^2<2\beta\sigma^2}\leq \exp(-(n-p)\phi(2\beta))$, so Condition \eref{conditionsortho} ensures that 
$$p\,\p\pa{\hat\sigma^2<2\beta\sigma^2}\leq 1.$$
Finally, since $12(b+\log p)+36$ is smaller than  $16(2+b+\log p)$ we get \eref{formulevarianceinconnue}.

\end{document}